\let\svthefootnote\thefootnote
\newcommand\blankfootnote[1]{%
\let\thefootnote\relax\footnotetext{#1}%
\let\thefootnote\svthefootnote%
}
\theoremstyle{plain}
\newtheorem{theorem}{Theorem}[section]
\newtheorem{lemma}[theorem]{Lemma}
\newtheorem{claim}[theorem]{Claim}
\newtheorem{corollary}[theorem]{Corollary}
\newtheorem{observation}[theorem]{Observation}
\theoremstyle{definition}
\newtheorem{remark}[theorem]{Remark}
\newcommand{\da}{\rotatebox[origin=c]{45}{$\Box$}}
\DeclareMathAlphabet{\mathbbmsl}{U}{bbm}{m}{sl}
\DeclareMathAlphabet{\mathpzc}{OT1}{pzc}{m}{it}
\DeclareMathAlphabet{\mathsfit}{T1}{\sfdefault}{\mddefault}{\sldefault}\SetMathAlphabet{\mathsfit}{bold}{T1}{\sfdefault}{\bfdefault}{\sldefault}
\newcommand\appendix@section[1]{%
\refstepcounter{section}%
\orig@section*{Appendix \@Alph\c@section. #1}%
\addcontentsline{toc}{section}{Appendix \@Alph\c@section. #1}%
}
\let\orig@section\section
\g@addto@macro\appendix{\let\section\appendix@section}
\begin{document}

\title{\hspace{-1.5mm}{\bf Saturation numbers of   bipartite graphs in random graphs}\\ \vspace{1cm}}

\author{
\hspace{-0.5mm}Meysam Miralaei$^{^{1, a}}$ \,   Ali Mohammadian$^{^{2,b,c}}$ \,  Behruz Tayfeh-Rezaie$^{^{1,b}}$ \,     Maksim Zhukovskii$^{^3}$ \\ \vspace{1mm}
$^{^1}$School of Mathematics,    Institute for Research in
Fundamental  Sciences (IPM),  \\  P.O. Box 19395-5746, Tehran, Iran \\ \vspace{1mm}
$^{^2}$School of Mathematical Sciences,    Anhui University,  \\
Hefei 230601,  Anhui,    China \\ \vspace{1mm}
$^{^3}$Department of Computer Science,   University of Sheffield,     \\
Sheffield S1 4DP, UK    \\ \vspace{2mm}
\href{mailto:m.miralaei@ipm.ir}{m.miralaei@ipm.ir} \quad  \href{mailto:ali\_m@ahu.edu.cn}{ali\_m@ahu.edu.cn} \quad     \href{mailto:tayfeh-r@ipm.ir}{tayfeh-r@ipm.ir} \quad  \href{mailto:m.zhukovskii@sheffield.ac.uk}{m.zhukovskii@sheffield.ac.uk} \\ \vspace{1cm}}

\blankfootnote{\hspace{-6mm}$^{^a}$Partially  supported by a grant from IPM.\\
$^{^b}$Partially  supported by Iran  National  Science Foundation   under project number  99003814.\\
$^{^c}$Partially  supported   by the Natural Science Foundation of Anhui Province  with  grant identifier 2008085MA03 and by the National Natural Science Foundation of China with  grant number 12171002.}

\date{}

\maketitle

\begin{abstract}
For a  given graph   $F$, the $F$-saturation number of  a graph $G$,    denoted by $ \mathrm{sat}(G, F)$,  is the minimum  number of edges in  an  edge-maximal $F$-free subgraph of $G$.
In 2017, Kor\'andi and Sudakov determined  $ \mathrm{sat}(\mathbbmsl{G}(n, p), K_r)$ asymptotically, where $\mathbbmsl{G}(n, p) $ denotes  the Erd\H{o}s$\text{\bf--}$R\'enyi random graph and $  K_r$ is the complete graph  on $r$ vertices. In this paper, among other results,
we
present an asymptotic upper bound on $\mathrm{sat}(\mathbbmsl{G}(n, p), F)$  for any bipartite graph $F$ and also an
asymptotic lower bound on $\mathrm{sat}(\mathbbmsl{G}(n, p), F)$  for any complete  bipartite graph $F$. \\ \vspace{-2mm}

\noindent{\bf Keywords:}  Bipartite graph,  Random graph, Saturation number. \\ \vspace{-2mm}

\noindent{\bf 2020 Mathematics Subject Classification:}    05C35, 05C80. \\ \vspace{1cm}
\end{abstract}

\section{Introduction}

All graphs in this paper  are assumed to be finite, undirected,  and without loops or multiple edges.  Fix a graph $F$. We say that a graph $ G $ is {\it $ F $-free} if $ G $ has no subgraph isomorphic to $ F $. Tur\'an \cite{tur} posed one of the  foundational problems   in extremal graph theory in 1941 which was about the  maximum number
of edges in   an $ F $-free   graph on $n$ vertices. Later, Zykov \cite{zyk} introduced a dual idea  in 1949 which asks for  the minimum  number of edges in an edge-maximal $F$-free graph  on  $n$ vertices. In this paper, we deal with a random version of this concept. We below define the   concept  in a more general form.

Let $ G $ be a graph. The edge set of $G$ is  denoted by $E(G)$. A spanning subgraph  $H$  of $G$  is  said to be an {\it $F$-saturated subgraph of $G$} if $H$ is $F$-free and the addition of any edge from $E(G)\setminus E(H)$ to $H$ creates a copy of $F$. The  minimum number of edges  in  an  $F$-saturated  subgraph of $G$ is  denoted by $\mathrm{ sat}(G, F)$. Let $ K_r $ be the complete   graph on   $r$ vertices and $ K_{s,t} $ be the complete bipartite graph with parts of sizes $ s $ and $ t $.
Usually, $ \mathrm{ sat}(K_n, F)$ is   written as   $ \mathrm{ sat}(n, F)$.
Erd\H{o}s, Hajnal, and Moon     \cite{erd}   proved  that
$$\mathrm{ sat}(n,  K_r)=(r-2)n-{{r-1}\choose{2}},$$ where $n\geqslant r\geqslant 2$.
Also, with the assumption $ t\geqslant s $, Bohman, Fonoberova, and Pikhurko \cite{pikh} proved that
\begin{align*}
\mathrm{sat}(n, K_{s,t})=\frac{2s+t-3}{2}n+O\left(n^{\frac{3}{4}}\right).
\end{align*}
We refer the reader to the survey   \cite{fau} for more  known results on saturation in graphs.

Recall that  the Erd\H{o}s--R\'enyi random graph model $\mathbbmsl{G}(n, p)$ is the
probability space of all graphs on a fixed vertex set of size $n$ where every two  distinct    vertices  are adjacent   independently with probability $p$.
Throughout this paper, $ p$ is assumed to be a fixed real number in  $ (0,1)  $.
Recall that the  notion `with high probability', which is written as `{\sl whp}' for brevity,    is used whenever   an event  occurs in     $\mathbbmsl{G}(n, p)$  with a  probability approaching $1$ as $n\rightarrow\infty$.
The study of saturation numbers  in random graphs was initiated in 2017 by Kor\'andi and   Sudakov \cite{kor}.
They proved that {\sl whp}
$$\mathrm{sat}\big(\mathbbmsl{G}(n, p), K_r\big)=\big(1+o(1)\big)n\log_{\frac{1}{1-p}}n$$
for any fixed     $r\geqslant3$. Mohammadian and Tayfeh-Rezaie \cite{MT} studied the saturation numbers for stars and  found that {\sl whp}
\begin{align*}
\mathrm{sat}\big(\mathbbmsl{G}(n, p), K_{1, t}\big)=\frac{t-1}{2}n-\big(t-1+o(1)\big)\log_{\frac{1}{1-p}}n
\end{align*}
for any fixed  $t\geqslant2$. Their result   was refined by Demyanov and Zhukovskii in \cite{DZ} where  it  has been proved  that   {\sl whp} $\mathrm{sat}\big(\mathbbmsl{G}(n, p), K_{1, t}\big)$ is concentrated in a set of two  points. The related classical  result
had been proved by K\'aszonyi  and   Tuza    \cite{kas} as
\begin{align*}
\mathrm{sat}(n, K_{1, t})=\left\{\begin{array}{llll}\vspace{-4.5mm}&\\
\mathlarger{{{t}\choose{2}}+{{n-t}\choose{2}}} & \quad  \mbox{{\large if }} \mathlarger{t+1\leqslant n\leqslant\frac{3t}{2}}\mbox{,}\\
\vspace{-1mm}\\
\mathlarger{\left\lceil\frac{t-1}{2}n-\frac{t^2}{8}\right\rceil}   & \quad    \mbox{{\large if }} \mathlarger{n\geqslant\frac{3t}{2}}\mbox{.}\\\vspace{-4.25mm}&
\end{array}\right.
\end{align*}
Demidovich,  Skorkin, and Zhukovskii \cite{DSZ} proved that {\sl whp}
\begin{align*}
\mathrm{sat}\big(\mathbbmsl{G}(n, p), C_k\big)=n+\mathnormal{\Theta}\left(\frac{n}{\log n}\right)
\end{align*}
for any $ k\geqslant 5$, where $ C_k$ is a cycle graph on $ k$ vertices, while
$$
\left(\frac{3}{2}+o(1)\right)n\leqslant\mathrm{sat}\big(\mathbbmsl{G}(n, p), C_4\big)\leqslant  \big(c_p+o(1)\big)n
$$
for some explicit constant $c_p$. In particular,   $c_{1/2}=27/14$.

The exact values of both $ \mathrm{sat}(n, K_{s, t}) $ and $ \mathrm{sat}(\mathbbmsl{G}(n, p), K_{s,t}) $ are still unknown. Note that, for any  connected graph $F$ with no cut edges, both $\mathrm{sat}(n, F)$ and $ \mathrm{sat}(\mathbbmsl{G}(n, p), F) $ are at least $n-1$, since each    $F$-saturated subgraph should be connected. Therefore, in particular, {\sl whp}  $ \mathrm{sat}(\mathbbmsl{G}(n, p), K_{s,t})\geqslant  n-1$   if $t\geqslant  s\geqslant  2$. Diskin,   Hoshen, and  Zhukovskii  \cite{DHZ}  showed    that,     for any bipartite graph $F$, there exists a constant $c_F$ such that  $ \mathrm{sat}(\mathbbmsl{G}(n, p), F)\leqslant  c_Fn$ {\sl whp}. However,  an explicit value of $c_F$ was not known.
In this paper, we prove the following theorem for any  arbitrary bipartite
graph.

\begin{theorem}\label{upb}
Let $ p\in (0, 1)$ be constant and let $ F$ be a bipartite graph with no  isolated vertices. Let $\{A_1, B_1\},  \ldots, \{A_k, B_k\}$ be the vertex bipartitions of all the connected components of $ F$ with
$ |B_i|\geqslant |A_i|$ for every $ i$. Let $ a=\max\{|A_1|, \ldots, |A_k|\}$ and $\delta$ be the minimum degree over all vertices from $A_i$ with $|A_i|=a$. Then, {\sl whp}
\[
\mathrm{sat}\big(\mathbbmsl{G}(n, p), F\big)\leqslant\left(\frac{\delta -1}{p^{a-1}}-\frac{\delta -2a+1}{2}+o(1)\right)n.
\]
\end{theorem}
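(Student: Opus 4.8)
The plan is to construct, with high probability, an explicit $F$-saturated subgraph $H$ of $\mathbbmsl{G}(n,p)$ with the claimed number of edges. The intuition is that the "cheapest" obstruction to adding an edge should come from the component of $F$ that forces the largest small side $a$: if we can arrange that adding any new edge $uv$ completes a copy of the relevant component $F_i$ (with $|A_i| = a$), we are done. To make adding $uv$ create such a copy, it suffices that $u$ plays the role of a minimum-degree vertex $x \in A_i$ of degree $\delta$, so we need $u$ to already have $\delta - 1$ neighbours in $H$ that, together with $v$, can be completed to an embedded copy of $F_i$. This is why the bound involves $\delta - 1$ edges "per vertex" scaled by $p^{-(a-1)}$: to guarantee that a given set of $\delta - 1$ prescribed neighbours of $u$ extends to a copy of $F_i$ using $v$, we need the rest of the skeleton of $F_i$ (the other $a-1$ vertices of $A_i$ and all of $B_i$) to be findable in $\mathbbmsl{G}(n,p)$, and a greedy embedding of those $a-1$ extra $A$-vertices into a common neighbourhood succeeds with room to spare precisely because $p$ is constant; the $p^{a-1}$ factor is the probability cost of pinning down the right adjacencies, and we offset it by taking a slightly larger but still linear-sized "core."

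Concretely, I would proceed as follows. First, fix the critical component $F_i$ with $|A_i| = a$ and a vertex $x \in A_i$ with $\deg_{F_i}(x) = \delta$; write $N_{F_i}(x) = \{y_1, \dots, y_{\delta-1}, y_\delta\}$ and note $x$ together with the rest of $F_i$ minus the edge $xy_\delta$ is what we must already see. Partition $V(\mathbbmsl{G}(n,p))$ into a small "gadget" part $W$ of size $o(n)$ and a large part $U$. Inside $W$, using that $\mathbbmsl{G}(n,p)$ restricted to any linear set whp contains any fixed graph many times, place a bounded-size $F$-free gadget that will absorb the "boundary" edges and make sure that every edge inside $W$ or between $W$ and $U$, when added, creates a copy of $F$. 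Then on $U$, give each vertex $u$ exactly $\delta - 1$ carefully chosen neighbours so that: (a) the resulting graph on $U$ is $F$-free (this is where we need $\delta-1$ small enough relative to the structure of $F$, and where we use that $F$ has a component needing a vertex of degree $\geqslant \delta$ on the small side); and (b) for every non-edge $uv$ with $u, v \in U$, the graph $H + uv$ contains a copy of $F_i$ in which $u \mapsto x$, $v \mapsto y_\delta$, the chosen $\delta - 1$ neighbours of $u$ map onto $y_1,\dots,y_{\delta-1}$, and the remaining $a - 1$ vertices of $A_i$ and all of $B_i$ are embedded greedily into appropriate common neighbourhoods guaranteed by $\mathbbmsl{G}(n,p)$. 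Counting edges: roughly $\frac{\delta-1}{2} n$ from the degree-$(\delta-1)$ graph on $U$ would be the naive count, but to make step (b) work whp we cannot use an arbitrary $(\delta-1)$-regular-ish graph; we instead need the $\delta - 1$ neighbours of $u$ to be chosen so that the required $(a-1)$-fold common-neighbourhood extension exists, and the cost of ensuring this is what inflates $\frac{\delta-1}{2}$ to $\frac{\delta - 1}{p^{a-1}} - \frac{\delta - 2a + 1}{2}$; the $-\frac{\delta-2a+1}{2}$ correction comes from the $a-1$ extra $A_i$-vertices (shared across the copies) and from avoiding double-counting edges that serve simultaneously as "incidence" edges of two endpoints.

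The technical heart is arranging, whp and edge-efficiently, the structure on $U$ described in (b). I would do this by a two-step randomized/greedy construction: first select, for each $u \in U$, a set $S_u$ of $\delta - 1$ vertices such that the sets $\{u\} \cup S_u$ have controlled overlap and such that $S_u$ together with $\mathbbmsl{G}(n,p)$-edges has a large common neighbourhood (this holds whp for a random choice since $p$ is constant, by a Chernoff bound on common-neighbourhood sizes); then add exactly the edges $\{u s : s \in S_u\}$, taking care that the union graph is still $F$-free — for this I would bound the maximum "local density" of the construction (each vertex has bounded degree in $H\setminus$(gadget), or more precisely the subgraph has no copy of any $F_j$ because any putative copy would need a vertex of $A_j$-degree $\geqslant \delta$ hitting the degree pattern we forbade). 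The main obstacle I anticipate is exactly this simultaneous requirement: the edge set must be dense enough that every non-edge's addition completes $F_i$ via the $x \mapsto u$, $y_\delta \mapsto v$ embedding, yet sparse and structured enough to remain $F$-free; balancing these, and verifying the $F$-freeness of the explicit construction (which likely requires a careful case analysis over which component $F_j$ could appear and where its high-degree small-side vertex could sit), is where the real work lies. The random-graph input — that any fixed graph appears in, and extends into, any linear vertex subset whp, with the right quantitative common-neighbourhood bounds — is standard and I would quote or prove it quickly via first-moment/Chernoff arguments.
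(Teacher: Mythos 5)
There is a genuine gap, and in fact the construction you describe cannot work. You propose to build the saturated graph explicitly by giving each vertex $u$ of the large part $U$ exactly $\delta-1$ chosen neighbours and adding only the edges $\{us : s\in S_u\}$, so that every non-edge of $U$ completes a copy of the critical component. Such a graph has at most $(\delta-1)|U|+o(n)$ edges, a quantity independent of $p$; but the paper's own lower bound (Theorem \ref{lowbnd}) shows that for $F=K_{s,t}$ with $t>s$ (where $a=s$ and $\delta=t$) every $K_{s,t}$-saturated subgraph of $\mathbbmsl{G}(n,p)$ has at least $\left(\frac{t-s}{4p^{s-1}}+o(1)\right)n$ edges, which exceeds $(\delta-1)n=(t-1)n$ once $p$ is small. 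So no graph of the shape you describe can be $F$-saturated in general, and your attempt to reconcile the count with the target --- saying that the ``cost'' of choosing the $\delta-1$ neighbours carefully inflates $\frac{\delta-1}{2}$ to $\frac{\delta-1}{p^{a-1}}-\frac{\delta-2a+1}{2}$ --- is not a mechanism: if each vertex really receives $\delta-1$ prescribed neighbours, no care in choosing them changes the edge count. The subsequent paragraph about a ``two-step randomized/greedy construction'' defers exactly the step that cannot succeed.

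The idea you are missing is that one should not construct a saturated graph at all. The paper builds only an $F$-\emph{free} graph $H_0$, invokes the trivial fact (Observation \ref{obs}) that some $F$-saturated $H\supseteq H_0$ exists, and shows that the structure of $H_0$ forces \emph{any} such $H$ to be sparse. Concretely, $H_0$ is the union of the complete bipartite graphs between fixed disjoint $(a-1)$-sets $V_1,\ldots,V_\ell$ (with $\ell=\mathnormal{\Theta}(\log n)$) and the layers $W_i=N(V_i)\setminus(V\cup V'\cup M_{i-1})$, which partition almost all of $V(G)$ with $|W_i|\approx p^{a-1}(1-p^{a-1})^{i-1}n$. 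Since any vertex $x\notin V_i$ with $\delta$ neighbours in $W_i$ would, together with $V_i$, $W_i$, and the rest of $H_0$, complete a copy of $F$, every vertex of $H$ has at most $\delta-1$ neighbours in each layer; summing these contributions over the layers, weighted by the geometrically decaying layer sizes, yields $\frac{\delta-1}{2}\sum_i(2i-1)|W_i|\approx\frac{\delta-1}{2}\cdot\frac{2-p^{a-1}}{p^{a-1}}n$, which together with the $(a-1)n$ edges of $H_0$ is precisely the claimed bound. This free-then-extend reduction and the layered common-neighbourhood decomposition are where the factor $p^{-(a-1)}$ actually comes from, and neither appears in your proposal.
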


Our proof of Theorem  \ref{upb},  which  is presented in Section  \ref{sc:upper_bip},   is based on the construction suggested in  \cite{DHZ}. Actually, we have tuned the parameters of the construction in order to achieve the optimal bound.
For $F=K_{s, t}$  with  $ t\geqslant s$, Theorem \ref{upb} shows that {\sl whp}
$$
\mathrm{sat}(\mathbbmsl{G}(n, p), K_{s,t})\leqslant\left(\frac{t-1}{p^{s-1}}- \frac{t-2s+1}{2}+o(1)\right)n.
$$
For a lower bound on $\mathrm{sat}(\mathbbmsl{G}(n, p), K_{s,t})$,  we prove the following theorem.

\begin{theorem}\label{th:bip_intro}
Let   $ t\geqslant s\geqslant 2 $ be fixed integers and let $ p\in (0, 1)$ be constant. Then, {\sl whp}
$$
\mathrm{sat}(\mathbbmsl{G}(n, p), K_{s,t})\geqslant\left(\max\left\{ \frac{2s+t-3}{2}, \frac{t-s}{4p^{s-1}}+\frac{s-1}{2}\right\} +o(1)\right)n.
$$
\end{theorem}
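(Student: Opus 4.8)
The right‑hand side is a maximum of two quantities, so the plan is to prove, whp, the two lower bounds $\mathrm{sat}(\mathbbmsl{G}(n,p),K_{s,t})\geqslant(\tfrac{2s+t-3}{2}+o(1))n$ and $\mathrm{sat}(\mathbbmsl{G}(n,p),K_{s,t})\geqslant(\tfrac{t-s}{4p^{s-1}}+\tfrac{s-1}{2}+o(1))n$ separately and combine them. Fix a $K_{s,t}$‑saturated subgraph $H$ of $G=\mathbbmsl{G}(n,p)$, and for a vertex set $S$ write $N^{\ast}(S)$ for its common neighbourhood in $H$. The following hold whp and will be used throughout: every vertex of $G$ has degree $(1+o(1))pn$ (hence $\Delta(H)\leqslant(1+o(1))pn$ and every vertex has at least $(1-p-o(1))n$ non‑neighbours), and every $(s-1)$‑element vertex set $S$ has at most $(1+o(1))p^{s-1}n$ common neighbours in $G$, and so $|N^{\ast}(S)|\leqslant(1+o(1))p^{s-1}n$ — the latter by a Chernoff bound together with a union bound over the $n^{s-1}$ choices of $S$. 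Two deterministic consequences of saturation are recorded here: since every vertex $v$ has a non‑neighbour $u$ in $G$, adding $uv$ creates a copy of $K_{s,t}$ in which $v$ has degree at least $s$, so $\deg_H(v)\geqslant s-1$; and if in addition $\deg_H(v)\leqslant t-2$, then $v$ must lie on the $t$‑side of every such copy, which exhibits an $(s-1)$‑set $S\subseteq N_H(v)$ with $|N^{\ast}(S)|\geqslant t$ and $|N^{\ast}(S)\cap N_H(u)|\geqslant t-1$, and forces $u$ itself to have $\deg_H(u)\geqslant t-1$.

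\textbf{The bound $\tfrac{2s+t-3}{2}$.} For this term I would transcribe the lower‑bound argument of Bohman, Fonoberova and Pikhurko for $\mathrm{sat}(n,K_{s,t})$. That argument rests only on $\delta(H)\geqslant s-1$ and on the structure just described — a vertex of degree below $t-1$ has a neighbourhood containing the small side of a copy of $K_{s-1,t}$ in $H$ — together with a double count of the edges between the degree‑$(s-1)$ vertices and the rest. The only change is to read ``non‑neighbour'' inside $G$ rather than inside $K_n$; this is harmless because whp every vertex of $G$ still has $\Theta(n)$ non‑neighbours, and the error term merely weakens from $O(n^{3/4})$ to $o(n)$.

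\textbf{The bound $\tfrac{t-s}{4p^{s-1}}+\tfrac{s-1}{2}$.} Assume $s<t$, the estimate being immediate otherwise. The target is $\sum_v\deg_H(v)=2e(H)\geqslant(s-1)n+\tfrac{t-s}{2p^{s-1}}n-o(n)$, and the mechanism is that vertices of small $H$‑degree force degree‑heavy, almost‑disjoint common neighbourhoods. Concretely, for a vertex $v$ of degree at most $t-2$ and each of its $\approx pn$ neighbours $u$ in $G$ that is not a neighbour in $H$, the newly created $K_{s,t}$ exhibits an $(s-1)$‑set $S\subseteq N_H(v)$ with $|N^{\ast}(S)\cap N_H(u)|\geqslant t-1$; after a pigeonhole over the bounded number of $(s-1)$‑subsets of $N_H(v)$, one fixes an $S$ and sums over $u$ to obtain $\sum_{z\in N^{\ast}(S)}\deg_H(z)=\Omega(pn)$, while $|N^{\ast}(S)|\leqslant(1+o(1))p^{s-1}n$. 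Two further facts from $K_{s,t}$‑freeness are then decisive: for any $(s-1)$‑set $S$ the graph $H[N^{\ast}(S)]$ has maximum degree at most $t-1$ (otherwise $S$ together with a vertex of $N^{\ast}(S)$ and $t$ of its neighbours span a $K_{s,t}$), and distinct $(s-1)$‑sets $S,S'$ satisfy $|N^{\ast}(S)\cap N^{\ast}(S')|=|N^{\ast}(S\cup S')|\leqslant t-1$ since $|S\cup S'|\geqslant s$. Using these to control overlaps and internal degrees, and combining with the observation (again from the structure above: a vertex of degree $\leqslant t-2$ has its $H$‑missing $G$‑edges only to vertices of degree $\geqslant t-1$) that at least $(\tfrac12-o(1))n$ vertices have degree at least $t-1$, one balances the two sources of edges — a bounded number, $\lesssim p^{-(s-1)}$, of essentially distinct heavy common neighbourhoods, each of size $\lesssim p^{s-1}n$ but incident to $\Omega(pn)$ edge‑ends, against a linear number of vertices of degree at least $t-1$ — to reach the claimed inequality.

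\textbf{The main difficulty.} The hard part will be extracting the exact constant $\tfrac14$: the crude count ``$|V_{\leqslant t-2}|\leqslant|V_{\geqslant t-1}|$'' yields the weaker constant $\tfrac12$ (it is enough at $p=1$, where the first term dominates anyway) but is insufficient for $p<1$. Getting $\tfrac14$ requires both carefully avoiding double counting in the charging scheme and, above all, handling the regime in which the vertices of degree exactly $s-1$ form only a vanishing fraction of $V(H)$; there the bound cannot be read off from the number of low‑degree vertices, and one must instead track the total excess $\sum_v(\deg_H(v)-(s-1))$ and quantify how it is forced upward by the almost‑disjoint family $\{N^{\ast}(S)\}$ of common neighbourhoods of size at most $p^{s-1}n$.
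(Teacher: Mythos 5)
Your plan for the first bound (Theorem \ref{pikh.low} in the paper) is directionally the same as the paper's, which indeed adapts Bohman--Fonoberova--Pikhurko; but ``the only change is to read non-neighbour inside $G$'' understates the adaptation. In $\mathbbmsl{G}(n,p)$ one cannot assume two low-degree vertices are $G$-adjacent, so bounding the number of vertices of degree at most $s+t-3$ requires extracting a large clique among them and running a Ramsey-type edge-colouring argument, and bounding the vertices with at most $s-2$ neighbours in the high-degree set requires a path-of-length-three count against the whp edge density of $G$. These are fixable omissions, not a wrong approach.

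The second bound is where there is a genuine gap. Your mechanism is seeded exclusively by vertices of $H$-degree at most $t-2$, and such vertices are too rare to drive the bound: by your own observation, every $G$-edge missing from $H$ and incident to such a vertex has its other endpoint of degree at least $t-1$, so the set $L$ of vertices of degree at most $t-2$ satisfies $G[L]=H[L]$; since $H[L]$ has maximum degree $t-2$ while whp every set of size at least $\log^2 n$ spans $(1+o(1))p\binom{|L|}{2}$ edges of $G$, whp $|L|=O(\log^{2}n)$ (and $L$ may well be empty, e.g.\ if $\delta(H)\geqslant t-1$). Your pigeonhole therefore yields at most polylogarithmically many heavy common neighbourhoods, each charged only $\Omega(pn)$ edge-ends, which cannot produce the required $\Omega(n/p^{s-1})$ edges --- a quantity that for small $p$ vastly exceeds the $\tfrac{t-1}{2}n$ you get from degrees alone. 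The missing idea, which you flag as ``the main difficulty'' but do not resolve, is a global covering statement independent of the degree sequence: for \emph{every} edge $xx'\in E(G)\setminus E(H)$ the created copy of $K_{s,t}$ produces an $(s-1)$-set $Y$ with $|N^{*}(Y)|\geqslant t$, $x'\in N^{*}(Y)$ and $|N^{*}(Y\cup\{x\})|=t-1$; hence the $(1+o(1))pn^{2}/2$ missing edges are covered by the sets $E_G\bigl(N^{*}(Y),F_Y\bigr)$ with $F_Y=\{x:|N^{*}(Y\cup\{x\})|=t-1\}$. After discarding the sets $Y$ containing a vertex of low or moderate degree (whose total contribution is $o(n^{2})$), the estimate $|E_G(N^{*}(Y),F_Y)|\leqslant p|N^{*}(Y)||F_Y|(1+o(1))\leqslant p^{s}n|F_Y|(1+o(1))$ forces $\sum_Y|F_Y|\geqslant\tfrac{n}{2p^{s-1}}(1+o(1))$, and each $x\in F_Y$ then contributes $t-s$ edges into $N^{*}(Y)$ (after removing the $O(\mathrm{polylog})$-sized pairwise overlaps and the few vertices with $s$ neighbours in the overlap), on top of the baseline $s-1$ edges per vertex. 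That is the actual source of the constant $\tfrac14$ in Theorem \ref{lowbnd}; it is not obtained by refining a charging scheme over low-degree vertices.
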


The proof of the lower bound in Theorem \ref{th:bip_intro} is the most involved part of the paper. It is presented in Section  \ref{sc:lower_bip}.
For every fixed    $t>s$, our bounds in Theorem \ref{th:bip_intro}   imply that  the $K_{s,t}$-saturation number in $\mathbbmsl{G}(n,p)$ is $\mathnormal{\Theta}(p^{1-s}n)$.   Let us also note that, in  the   case $s=t=2$,   Theorem \ref{th:bip_intro} provides   the lower bound  obtained  in  \cite{DSZ}, while our upper bound  is slightly worse.

As we saw above, {\sl whp}
$ \mathrm{sat}(\mathbbmsl{G}(n, p), K_r)\gg\mathrm{sat}(n, K_r) $ for any $r\geqslant 3$. For complete bipartite graphs, the saturation number is more stable, that is,    $ \mathrm{sat}(\mathbbmsl{G}(n, p), K_{s,t})$ is linear in $n$ {\sl whp}  as well as $ \mathrm{sat}(n, K_{s,t})$. For $t>s\geqslant  2$ and   sufficiently small $p\in(0, 1)$, there is no asymptotical stability,  that is,    there exists a constant $c>1$ such that   $ \mathrm{sat}(\mathbbmsl{G}(n, p), K_{s,t})\geqslant c \, \mathrm{sat}(n, K_{s,t})$ {\sl whp}. However, for $s=t$ or $t>s\geqslant  2$ and   sufficiently large $p\in(0, 1)$, we do not know whether there is     an asymptotical stability. Finally, the $K_{1,t}$-saturation number is asymptotically stable, while  $\mathrm{sat}(\mathbbmsl{G}(n, p), K_{1,t})< \mathrm{sat}(n, K_{1,t})$ {\sl whp}.
Note that, for cycles, {\sl whp}
$\mathrm{sat}(\mathbbmsl{G}(n, p), C_k)\leqslant((k+2)/(k+3)+o(1))\mathrm{sat}(n, C_k)$ for any   $k\geqslant 5$  by  a result of  F\"uredi and  Kim     \cite{Furedi}.

For the sake of completeness,  we give in Section  \ref{sc:lower_general}  two simple general lower bounds on $\mathrm{sat}(\mathbbmsl{G}(n, p), F)$ for any  arbitrary graph $F$ which are asymptotically tight for certain graph families.

\section{Notation and preliminaries}\label{prelim}

In this section, we introduce notation   and formulate several properties of random graphs that will be used in the rest of the  paper.
First, let us fix some more notation and terminology  of  graph  theory.

Let $G$ be a graph. The vertex set of $G$ is  denoted by $V(G)$  and the {\it order}  of $G$ is defined  as $|V(G)|$.
For a  subset   $X$ of $V(G)$, we denote the induced subgraph of $G$ on $X$  by $G[X]$.
For a  subset   $Y$ of $E(G)$, we denote by $G-Y$  the   graph obtained from  $G$ by removing the edges in  $Y$.
For a subset     $Z$ of $V(G)$, set       $N_G(Z)=\{v\in V(G) \, | \, v \text{   is adjacent to all vertices in } Z\} $.  For the sake of convenience, we write $N_G(z_1, \ldots, z_k)$ instead of $N_G(\{z_1, \ldots, z_k\})$.
For a vertex $ v$ of $ G$, we define the   {\it degree}  of $v$    as $|N_G(v)|$ and denote by $ d_G(v)$. The maximum and the minimum degree of vertices of $ G$ are  denoted by $ \mathnormal{\Delta}(G)$ and $ \delta(G)$, respectively. For two subsets $ S$ and $ T$ of $ V(G)$, we denote by $ E_{G}(S, T)$ the set of all edges
with endpoints in both $ S$ and $ T$. We write $ E_G(S)$ for  $E_G(S,S)$. We drop subscripts  if there is no danger of confusion.

In what follows, we   recall the  probabilistic results  that we make use of all in the  next sections.
The next lemma is well known and can be deduced  from the Chernoff  bound     \cite[Theorem 2.1]{JLR}.

\begin{lemma}\label{Cher.Ineq.}
Let $ X \thicksim \mathrm{Bin} (n, p) $ be a binomial random variable with parameters $ n$
and $ p$. If $ \mathbbmsl{E}[X]\rightarrow \infty $ as $ n \rightarrow \infty $, then
$ X=\mathbbmsl{E}[X](1+o(1))$ {\sl whp}.
\end{lemma}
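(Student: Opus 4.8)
The plan is to read the conclusion ``$X=\mathbbmsl{E}[X](1+o(1))$ {\sl whp}'' in its operational form: for every fixed $\varepsilon>0$, the probability that $X$ falls outside the window $\big[(1-\varepsilon)\mathbbmsl{E}[X],\,(1+\varepsilon)\mathbbmsl{E}[X]\big]$ tends to $0$ as $n\to\infty$. Writing $\mu=\mathbbmsl{E}[X]=np$, I would fix $\varepsilon\in(0,1)$ and bound the two-sided deviation probability $\Pr\big[|X-\mu|\geqslant\varepsilon\mu\big]$ directly from the multiplicative Chernoff inequality cited as \cite[Theorem 2.1]{JLR}. The only input from the hypothesis is that $\mu\to\infty$, which is exactly what drives the exponential tail to zero.

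Concretely, the two one-sided Chernoff bounds yield, for $0<\varepsilon\leqslant 1$,
\[
\Pr\big[X\geqslant(1+\varepsilon)\mu\big]\leqslant\exp\!\left(-\frac{\varepsilon^2\mu}{3}\right),
\qquad
\Pr\big[X\leqslant(1-\varepsilon)\mu\big]\leqslant\exp\!\left(-\frac{\varepsilon^2\mu}{2}\right),
\]
and summing them gives $\Pr\big[|X-\mu|\geqslant\varepsilon\mu\big]\leqslant 2\exp(-\varepsilon^2\mu/3)$. Since $\mu=\mathbbmsl{E}[X]\to\infty$ by assumption, the right-hand side tends to $0$ for every fixed $\varepsilon$; hence {\sl whp} we have $(1-\varepsilon)\mu\leqslant X\leqslant(1+\varepsilon)\mu$, which is precisely the claimed estimate.

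A small point is that the statement, read literally, posits a \emph{single} error sequence tending to zero rather than a separate bound for each fixed $\varepsilon$. I would dispose of this by a routine diagonalisation: choosing a sequence $\varepsilon_n\to 0$ that decays slowly enough that $\varepsilon_n^2\mu\to\infty$ (for instance $\varepsilon_n=\mu^{-1/4}$), the bound above still gives $\Pr\big[|X-\mu|\geqslant\varepsilon_n\mu\big]\leqslant 2\exp(-\varepsilon_n^2\mu/3)\to 0$, so $\varepsilon_n$ serves as the required $o(1)$. As for the main obstacle, there is essentially none: the lemma is a direct corollary of Chernoff's inequality, and the only care needed is bookkeeping---confirming that the exponent constant is strictly positive and that the choice of the decaying sequence $\varepsilon_n$ remains compatible with $\mu\to\infty$.
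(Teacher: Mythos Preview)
Your argument is correct and is precisely the deduction the paper has in mind: the lemma is stated there without proof, merely as a consequence of the Chernoff bound \cite[Theorem~2.1]{JLR}. Your unpacking of the two-sided tail bound and the diagonalisation to a single $o(1)$ sequence is exactly the routine verification that justifies the paper's one-line citation.
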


The following  lemma is a consequence of  Proposition 19 in \cite{Bottcher}.

\begin{lemma}\label{bandwidth}
For any constant $ p\in (0, 1)$, there is a constant $c$, depending on $p$, such that
$\mathbbmsl{G}(n, p)$   has the following property {\sl whp}.
For every  subset $ X$ of vertices with $|X|\geqslant c\log n$, the number of vertices with no neighbors in
$X$ is at most $c\log n $.
\end{lemma}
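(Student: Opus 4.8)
The plan is to recast the statement as a forbidden-configuration estimate and prove it by a first-moment (union-bound) argument, independently of the cited Proposition~19. Write $q=1-p\in(0,1)$ and fix a constant $c>4/|\ln(1-p)|$; since $p$ is a fixed constant, this quantity is finite and positive, so such a $c$ exists (all logarithms below are to base $e$, the base only affecting the value of $c$). For a vertex set $X$, let $Y(X)$ denote the set of vertices having no neighbor in $X$, so that the lemma asserts $|Y(X)|\leqslant c\log n$ for all $X$ with $|X|\geqslant c\log n$. The heart of the matter is the following claim: \emph{with the above choice of $c$, whp $\mathbbmsl{G}(n,p)$ contains no pair of disjoint vertex sets $A,B$ with $|A|=|B|=\ell:=\lfloor\tfrac{c}{2}\log n\rfloor$ and no edges between them.} I will first reduce the lemma to this claim, then prove the claim.

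For the reduction, observe the monotonicity $Y(X)\subseteq Y(A)$ whenever $A\subseteq X$: a vertex with no neighbor in the larger set $X$ certainly has none in $A$. Suppose the lemma failed, so that some $X$ with $|X|\geqslant c\log n$ satisfies $|Y(X)|>c\log n$. Choose $A\subseteq X$ with $|A|=\ell$, which is possible since $|X|\geqslant c\log n\geqslant\ell$. Then $|Y(A)|\geqslant|Y(X)|>c\log n$, while $|A|=\ell\leqslant\tfrac{c}{2}\log n$, so
\[
|Y(A)\setminus A|\geqslant|Y(A)|-\ell>c\log n-\tfrac{c}{2}\log n=\tfrac{c}{2}\log n\geqslant\ell .
\]
Picking any $\ell$-subset $B\subseteq Y(A)\setminus A$ produces two disjoint sets of size $\ell$ with no edges between them, since every vertex of $B\subseteq Y(A)$ has no neighbor in $A$. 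This contradicts the claim. (The floor function and the strict-versus-weak inequalities involve only routine constant bookkeeping, which is harmless because $c$ is taken strictly above the threshold.)

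To prove the claim, fix disjoint $A,B$ with $|A|=|B|=\ell$. The $\ell^2$ potential edges between $A$ and $B$ are independent, so the probability that none of them is present equals $q^{\ell^2}$. A union bound over the at most $\binom{n}{\ell}^2\leqslant n^{2\ell}$ such pairs gives
\[
\Pr[\exists\,A,B]\leqslant n^{2\ell}\,q^{\ell^2}=\exp\!\big(\ell\,(2\ln n-\ell\,|\ln q|)\big).
\]
Since $\ell\geqslant\tfrac{c}{2}\log n-1$ and $c>4/|\ln q|$, for large $n$ we have $\ell\,|\ln q|\geqslant(2+\varepsilon)\ln n$ for some fixed $\varepsilon>0$; hence the bracket is at most $-\varepsilon\ln n$ and the exponent is at most $-\varepsilon\,\ell\ln n=-\mathnormal{\Theta}\big((\log n)^2\big)$. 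Thus $\Pr[\exists\,A,B]\to0$, which proves the claim and therefore the lemma.

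The main obstacle — and the reason $c$ cannot be taken arbitrarily — is the tension in this last display between the number of candidate pairs and the probability that a given pair is empty. There are $\binom{n}{\ell}^2=e^{\mathnormal{\Theta}((\log n)^2)}$ pairs, a super-polynomial count, so the per-pair probability $q^{\ell^2}=e^{-\mathnormal{\Theta}((\log n)^2)}$ must win in the exponent; this is exactly what forces $\ell=\mathnormal{\Theta}(\log n)$ and pins the threshold at $c>4/|\ln(1-p)|$. Everything else (the monotonicity $Y(X)\subseteq Y(A)$, the reduction to sets of the minimal size $\ell$, and the floor juggling) is soft. One could alternatively invoke Proposition~19 of \cite{Bottcher}, as the authors do, but the direct union bound above is self-contained and already exhibits the correct order $c\log n$ on both sides of the statement.
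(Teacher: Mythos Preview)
Your proof is correct. The paper does not actually supply an argument for this lemma; it merely cites Proposition~19 of \cite{Bottcher} as a black box. Your route is genuinely different and self-contained: you reduce the statement, via the monotonicity $Y(X)\subseteq Y(A)$ for $A\subseteq X$, to the nonexistence of a disjoint pair of $\ell$-sets ($\ell=\lfloor\tfrac{c}{2}\log n\rfloor$) with an empty bipartite graph between them, and then kill that configuration with a straightforward union bound, the exponent comparison $n^{2\ell}q^{\ell^2}\to 0$ forcing the explicit threshold $c>4/|\ln(1-p)|$. This buys an elementary proof with an explicit admissible constant, whereas the cited result is a considerably more general sparse-regularity/bandwidth statement whose full strength is unnecessary here. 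The only minor looseness is cosmetic: in the displayed reduction you silently use both $|Y(A)|>c\log n$ and $\ell\leqslant\tfrac{c}{2}\log n$ in a single inequality, but the chain $|Y(A)\setminus A|\geqslant|Y(A)|-\ell>c\log n-\ell\geqslant\tfrac{c}{2}\log n\geqslant\ell$ is sound.
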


The following Lemma is an immediate consequence of the Chernoff  bound     \cite[Theorem 2.1]{JLR} and the union bound.

\begin{lemma}\label{random1}
Let $\lambda>1 $ and $ p\in (0, 1)$ be constants. Then,  $ \mathbbmsl{G}(n, p)$ has the following property {\sl whp}.
For every two disjoint subsets $ X, Y $ of vertices of size at least $ \log^{\lambda}n$, we have
$ |E(X)|=p\binom{|X|}{2}(1+o(1))$ and $ |E(X, Y)|=p|X||Y|(1+o(1))$.
\end{lemma}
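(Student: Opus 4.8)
The plan is a standard first-moment argument built directly on the multiplicative Chernoff bound \cite[Theorem 2.1]{JLR} together with the union bound, exactly as the statement advertises. I fix $\epsilon\in(0,1]$ and show that whp every admissible $X$ (and every admissible disjoint pair $X,Y$) has relative edge-deviation at most $\epsilon$; since $\epsilon$ is arbitrary, choosing a slowly vanishing sequence $\epsilon_n\to 0$ for which the failure probabilities still tend to $0$ packages these into the single $o(1)$ claimed. The quantitative heart of the matter is this: for a fixed set of size $x\ge\log^{\lambda}n$ the number of internal edges is distributed as $\mathrm{Bin}\big(\binom{x}{2},p\big)$, so its mean has order $p\log^{2\lambda}n$, whereas the number of candidate sets of that size is only $\binom{n}{x}\le e^{x\log n}$. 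Because $\lambda>1$, the Chernoff tail (exponent of order $\log^{2\lambda}n$) outruns the union-bound entropy (exponent of order $\log^{\lambda+1}n$), and the resulting sum over all sizes still tends to $0$.

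Concretely, for the internal-edge statement I would record, for each $x$ with $\log^{\lambda}n\le x\le n$, the estimate
\[
\Pr\Big[\,\exists\, X,\ |X|=x,\ \big|\,|E(X)|-p\tbinom{x}{2}\big|\ge \epsilon\, p\tbinom{x}{2}\Big]\le \binom{n}{x}\cdot 2\exp\!\Big(-\tfrac{\epsilon^{2}}{3}\,p\tbinom{x}{2}\Big)\le 2\exp\!\Big(x\log n-\tfrac{\epsilon^{2} p}{6}\,x(x-1)\Big).
\]
For $x\ge\log^{\lambda}n$ with $\lambda>1$ one has $x\gg\log n$, so the quadratic term dominates and, once $n$ is large, the exponent is at most $-c\,x^{2}$ for some $c=c(\epsilon,p)>0$. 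Each such term is therefore at most $\exp(-c\log^{2\lambda}n)$, and summing the at most $n$ sizes gives a total failure probability at most $2n\exp(-c\log^{2\lambda}n)\to 0$.

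For the bipartite count I would argue identically. For disjoint $X,Y$ with $|X|=x$ and $|Y|=y$ both at least $\log^{\lambda}n$, the quantity $|E(X,Y)|$ is $\mathrm{Bin}(xy,p)$ with mean $pxy\ge p\log^{2\lambda}n$, the number of such pairs is at most $\binom{n}{x}\binom{n}{y}\le e^{(x+y)\log n}$, and the union bound gives a per-$(x,y)$ contribution of at most $2\exp\!\big((x+y)\log n-\tfrac{\epsilon^{2}p}{3}xy\big)$. Using $xy\ge\tfrac12(x+y)\log^{\lambda}n$ together with $\log^{\lambda}n\gg\log n$, the negative term again dominates, the exponent is at most $-c'xy\le -c'\log^{2\lambda}n$, and summing over the at most $n^{2}$ size pairs gives $\to 0$.

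The only genuine subtlety — and precisely where the hypothesis $\lambda>1$ is used — is balancing these two competing exponentials: the entropy of selecting a size-$x$ set contributes $+\,x\log n$, while the concentration of a $\mathrm{Bin}(\binom{x}{2},p)$ variable contributes $-\,\Theta(x^{2})$. At the threshold size $x=\log^{\lambda}n$ these read $\log^{\lambda+1}n$ against $\log^{2\lambda}n$, so the tail wins exactly when $2\lambda>\lambda+1$, i.e.\ $\lambda>1$; for $\lambda=1$ and small $p$ the union bound would indeed fail. Beyond this balance I expect no obstacles, since everything else reduces to the routine two-sided Chernoff estimate and a convergent geometric-type sum.
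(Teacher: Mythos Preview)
Your proposal is correct and is precisely the approach the paper indicates: the lemma is stated there as an immediate consequence of the Chernoff bound \cite[Theorem~2.1]{JLR} together with the union bound, and you have simply supplied the routine details, including the observation that $\lambda>1$ is exactly what makes the Chernoff exponent $\Theta(\log^{2\lambda}n)$ beat the entropy term $\Theta(\log^{\lambda+1}n)$.
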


The following corollary  follows from  Lemma \ref{random1}   immediately.

\begin{corollary}\label{cor.random}
Let $ \lambda>1 $ and $ p\in (0, 1)$ be constants. Then,  $ \mathbbmsl{G}(n, p)$  has the following property {\sl whp}.
For every two subsets $ X, Y $ of vertices,    $ |E(X, Y)|\leqslant 3n\log^{\lambda}n+p|X||Y|(1+o(1))$.
\end{corollary}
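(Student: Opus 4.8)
The plan is to deduce this directly from Lemma~\ref{random1} by splitting $X$ and $Y$ according to their overlap. Work on the (whp) event provided by Lemma~\ref{random1}. Given arbitrary subsets $X,Y$ of the vertex set, write $Z=X\cap Y$, $X'=X\setminus Y$ and $Y'=Y\setminus X$, so that $X',Y',Z$ are pairwise disjoint with $X=X'\cup Z$ and $Y=Y'\cup Z$. Each edge counted by $E(X,Y)$ has one endpoint in $X$ and one in $Y$, hence falls into one of $E(X',Y')$, $E(X',Z)$, $E(Y',Z)$, $E(Z)$, so
\[
|E(X,Y)|\le |E(X',Y')|+|E(X',Z)|+|E(Y',Z)|+|E(Z)|.
\]
Since $|X||Y|=|X'||Y'|+|X'||Z|+|Y'||Z|+|Z|^2$ and $\binom{|Z|}{2}\le|Z|^2$, it suffices to bound each of the four terms by $p$ times the matching product, up to a $1+o(1)$ factor, allowing a negligible additive error for the pieces that are too small for Lemma~\ref{random1} to apply.

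Call a set \emph{small} if it has fewer than $\log^\lambda n$ vertices and \emph{large} otherwise; we may assume $\min\{|X|,|Y|\}\ge\log^\lambda n$, since otherwise $|E(X,Y)|\le\min\{|X|,|Y|\}\cdot n< n\log^\lambda n$ and we are done. For a large set $W$ Lemma~\ref{random1} gives $|E(W)|=p\binom{|W|}{2}(1+o(1))$; for disjoint $W_1,W_2$ it gives $|E(W_1,W_2)|=p|W_1||W_2|(1+o(1))$ when both are large, while if one of them is small then $|E(W_1,W_2)|\le\min\{|W_1|,|W_2|\}\cdot n< n\log^\lambda n$; likewise the number of edges incident to a small set $W$ is at most $|W|\cdot n< n\log^\lambda n$. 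Now run a short case analysis on which of $X',Y',Z$ are small. If $Z$ is small, then $|E(X',Z)|+|E(Y',Z)|+|E(Z)|$ is at most the number of edges meeting $Z$, which is $<n\log^\lambda n$, while $|E(X',Y')|$ is either $\le p|X'||Y'|(1+o(1))\le p|X||Y|(1+o(1))$ or $<n\log^\lambda n$. If $Z$ is large but $X'$ is small, then $|E(X',Y')|+|E(X',Z)|<n\log^\lambda n$ and $|E(Y',Z)|+|E(Z)|\le p\big(|Y'||Z|+|Z|^2\big)(1+o(1))=p|Z||Y|(1+o(1))\le p|X||Y|(1+o(1))$ when $Y'$ is large (using $|Z|\le|X|$), whereas $|E(Y',Z)|+|E(Z)|<n\log^\lambda n+p|Z|^2(1+o(1))\le n\log^\lambda n+p|X||Y|(1+o(1))$ when $Y'$ is small (using $|Z|^2\le|X||Y|$); the subcase ``$Y'$ small, $X'$ large'' is symmetric with the roles of $X$ and $Y$ exchanged. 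Finally, if $X',Y',Z$ are all large, all four terms satisfy the clean estimate and their sum is at most $p\big(|X'||Y'|+|X'||Z|+|Y'||Z|+|Z|^2\big)(1+o(1))=p|X||Y|(1+o(1))$. In every case $|E(X,Y)|\le 3n\log^\lambda n+p|X||Y|(1+o(1))$, uniformly over all $X,Y$ on the conditioned event.

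There is no real obstacle here: the statement is a bookkeeping corollary of Lemma~\ref{random1}. The only points to watch are that the pieces $X'$, $Y'$, $Z$ produced by the decomposition may be too small for Lemma~\ref{random1} to control — which is exactly why one keeps the deliberately wasteful additive term (in fact $2n\log^\lambda n$ would already suffice) and falls back on the trivial bound by the number of incident edges — and that the $o(1)$ in Lemma~\ref{random1} is a single fixed function of $n$, uniform over all admissible pairs of sets, so the finitely many error terms coalesce into one $1+o(1)$.
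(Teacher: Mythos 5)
Your proof is correct and follows exactly the route the paper intends: the paper offers no written argument beyond the remark that the corollary ``follows from Lemma~\ref{random1} immediately,'' and your decomposition into $X\setminus Y$, $Y\setminus X$, $X\cap Y$ with the small/large case analysis is precisely the bookkeeping that remark elides. Your observations about the uniformity of the $o(1)$ and the deliberately generous additive term (you in fact only need $2n\log^{\lambda}n$) are accurate.
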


Note that,  for a   positive fixed  integer $M$,  the probability that $\mathbbmsl{G}(n, p)$ does not contain a clique of size $M$ is $\exp(-\mathnormal{\Theta}(n^2))$ due to the  Janson bound  \cite[Theorem 2.14]{JLR}. Therefore, by the union bound, we get the following.

\begin{lemma}\label{random2}
Let $ \lambda>1 $,   $ p\in (0, 1)$  be constants  and let  $M$ be a  positive fixed integer. Then,  $ \mathbbmsl{G}(n, p)$  has the following property {\sl whp}.
Every subset $ X $ of vertices of size at least $ \log^{\lambda} n$ contains a clique of size $ M$.
\end{lemma}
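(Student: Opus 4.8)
The plan is to combine Janson's inequality with a union bound over vertex subsets of a single fixed size. First I would reduce to subsets of size exactly $m:=\lceil\log^{\lambda}n\rceil$: any subset of size at least $\log^{\lambda}n$ contains one of size $m$, and a copy of $K_M$ inside $\mathbbmsl{G}(n,p)[X']$ for $X'\subseteq X$ is also a copy inside $\mathbbmsl{G}(n,p)[X]$, so it suffices to show that {\sl whp} every $X$ with $|X|=m$ induces a copy of $K_M$.

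Next, I would fix one such $X$ and let $Y$ be the number of copies of $K_M$ in $\mathbbmsl{G}(n,p)[X]$. Since $M$ is a fixed integer and $p$ is a fixed constant, $\mu:=\mathbbmsl{E}[Y]=\binom{m}{M}p^{\binom{M}{2}}=\mathnormal{\Theta}(m^{M})$. To bound $\Pr[Y=0]$ via Janson's inequality \cite[Theorem 2.14]{JLR} I would estimate the dependency sum $\Delta$ over ordered pairs of potential $M$-cliques in $X$ sharing at least one edge: grouping such pairs by the size $j\in\{2,\ldots,M-1\}$ of their common vertex set, there are $\mathnormal{\Theta}(m^{2M-j})$ of them, each contributing $p^{2\binom{M}{2}-\binom{j}{2}}=\mathnormal{\Theta}(1)$, so the term $j=2$ dominates and $\Delta=\mathnormal{\Theta}(m^{2M-2})$. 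In particular $\Delta\geqslant\mu$ once $n$ is large, so Janson's inequality yields
\[
\Pr\big[\mathbbmsl{G}(n,p)[X]\text{ is }K_M\text{-free}\big]=\Pr[Y=0]\leqslant\exp\!\left(-\frac{\mu^{2}}{2\Delta}\right)\leqslant\exp\big(-c\,m^{2}\big)
\]
for some constant $c=c(p,M)>0$ and all sufficiently large $n$. (The cases $M\leqslant 2$ are immediate, since then a copy of $K_M$ is absent in $X$ only with probability $(1-p)^{\binom{m}{2}}$ or less.)

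Finally, I would take the union bound over the at most $\binom{n}{m}\leqslant n^{m}=e^{m\ln n}$ subsets of size $m$, giving
\[
\Pr\big[\exists\,X,\ |X|=m,\ \mathbbmsl{G}(n,p)[X]\text{ is }K_M\text{-free}\big]\leqslant e^{m\ln n-c\,m^{2}}=e^{-m(c\,m-\ln n)}.
\]
Because $\lambda>1$ we have $m\geqslant(\log n)^{\lambda}$, hence $c\,m-\ln n\to\infty$ (the base of the outer logarithm only changes $m$ by a constant factor), so the right-hand side tends to $0$, which is exactly the claim. I do not expect a genuine obstacle here; the one point to get right is that the quadratic saving $\exp(-\mathnormal{\Theta}(m^{2}))$ from Janson's inequality is actually needed — a weaker bound such as $\exp(-\mathnormal{\Theta}(m))$, e.g. from $\lfloor m/M\rfloor$ disjoint independent $M$-sets, would be overwhelmed by the factor $e^{m\ln n}$ from the union bound once $m$ is only polylogarithmic in $n$, and it is precisely here that the hypothesis $\lambda>1$ is used.
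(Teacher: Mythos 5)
Your proposal is correct and follows essentially the same route as the paper, which likewise derives the bound $\exp(-\mathnormal{\Theta}(m^2))$ for a fixed vertex subset from the Janson inequality \cite[Theorem 2.14]{JLR} and then applies a union bound over subsets, using $\lambda>1$ to beat the $e^{m\ln n}$ factor. Your write-up merely fills in the dependency-sum computation and the reduction to subsets of exact size $m$, which the paper leaves implicit.
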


\section{General lower bounds}\label{sc:lower_general}

In this section, we prove a lower bound on $ \mathrm{ sat}(G, F)$ for every two graphs $ G$ and $ F$ which provides a lower bound on $ \mathrm{sat}( \mathbbmsl{G}(n,p), F) $.
It is trivial that
\[
\mathrm{sat}(G, F)\geqslant \frac{\min\{\delta(G), \delta(F)-1\}}{2}n
\]
for any two graphs $ G$ and $ F$. In order to proceed,   we need the following definition.

Let $G$ be a graph and $k$ be a nonnegative   integer. A subset $S$ of $V(G)$  is called {\it $k$-independent}
if the maximum degree of $G[S]$ is at most  $k$. The  {\it $k$-independence number}  of $G$, denoted
by $\alpha_k(G)$, is defined as the maximum cardinality of a $k$-independent set in  $G$. In particular, $\alpha_0(G)=\alpha(G)$ is the usual independence
number of $G$.
Furthermore, define $r(G)=\min_{xy\in E(G)}\max \{ d(x) , d(y)\} $.

\begin{theorem}\label{anyg}
Let $ F $ be  a graph and let $r=r(F)$.
If $ r\geqslant 2$, then, 	for every graph $G$ on $n$ vertices,
$$\mathrm{ sat}(G, F)\geqslant\frac{(r-1)\big(n-\alpha_{r-2}(G)\big)}{2}.$$
\end{theorem}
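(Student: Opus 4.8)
The plan is to fix a minimum $F$-saturated subgraph $H$ of $G$, collect its low-degree vertices into one set, and show that this set is $(r-2)$-independent in $G$; the bound then drops out of the handshake lemma. First I would let $H$ be an $F$-saturated spanning subgraph of $G$ with $|E(H)| = \mathrm{sat}(G,F)$, and set $S = \{v \in V(G) : d_H(v) \leq r-2\}$. Every vertex of $V(G)\setminus S$ has $H$-degree at least $r-1$, so
$2\,\mathrm{sat}(G,F) = \sum_{v} d_H(v) \geq (r-1)\bigl(n-|S|\bigr)$, and it therefore suffices to prove $|S| \leq \alpha_{r-2}(G)$, i.e. that $G[S]$ has maximum degree at most $r-2$.

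To bound the degree of a fixed $v \in S$ inside $G[S]$, I would split the $G$-neighbours of $v$ into those joined to $v$ by an edge of $H$ and those joined by an edge of $E(G)\setminus E(H)$. There are at most $d_H(v) \leq r-2$ of the former. For the latter, the claim is that none of them lies in $S$: if $vw \in E(G)\setminus E(H)$, then by saturation $H+vw$ contains a copy of $F$, and since $H$ is $F$-free this copy must use the edge $vw$, which hence corresponds under the isomorphism to some edge $xy$ of $F$. By the definition of $r=r(F)$ as a minimum of $\max\{d(x),d(y)\}$ over all edges of $F$, we have $\max\{d_F(x),d_F(y)\} \geq r$; say $d_F(x) \geq r$. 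The endpoint of $vw$ playing the role of $x$ has degree at least $r$ in the copy of $F$, hence at least $r$ in $H+vw$, hence at least $r-1$ in $H$. As $d_H(v)\leq r-2$, that endpoint cannot be $v$, so it is $w$, giving $d_H(w)\geq r-1$ and thus $w\notin S$. Consequently every $v\in S$ has at most $r-2$ neighbours in $S$, so $|S|\leq\alpha_{r-2}(G)$ and the proof is complete.

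The argument is short and I do not anticipate a genuine obstacle; the one point requiring care is the observation that a copy of $F$ created by adding an edge must actually contain that edge, which is exactly what licenses the appeal to $r(F)$ (valid for every edge of $F$, not just for a minimising one), together with the elementary bookkeeping that deleting a single edge lowers a vertex degree by at most one. It is worth noting in passing that the two degenerate possibilities $S=\varnothing$ and $S=V(G)$ are absorbed uniformly by the same inequalities, the latter yielding only the trivial estimate $\mathrm{sat}(G,F)\geq 0$.
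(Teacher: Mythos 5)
Your proposal is correct and follows essentially the same route as the paper: both take a minimum $F$-saturated subgraph $H$, show that the set of vertices of $H$-degree at most $r-2$ induces no edge of $E(G)\setminus E(H)$ (so it is $(r-2)$-independent in $G$), and conclude by summing degrees over the remaining vertices. The only cosmetic difference is that you argue vertex-by-vertex about degrees in $G[S]$, while the paper phrases the same fact as $G[A]=H[A]$.
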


\begin{proof}
Let $H$ be an   $ F$-saturated   subgraph of   $G$. Let $ r\geqslant 2$ and let $A$ be the set of vertices of $H$  with   degree  at  most $r-2$ in $H$. Suppose that there are two vertices $x, y\in A$ with $ xy\in E(G)\setminus E(H)$. By definition of $ r$, adding $ xy$ to $ H$ does not create a copy of $ F$. This is a contradiction, since $H$ is  an  $F$-saturated subgraph of $G$.
This implies that $G[A]=H[A] $ and so  $|A|\leqslant\alpha_{r-2}(G)$. We hence obtain that
\begin{equation*}
|E(H)|\geqslant\dfrac{\sum\limits_{v\in V(H)\setminus A}d_H(v)}{2}\geqslant\frac{(r-1)\big(n-\alpha_{r-2}(G)\big)}{2}.
\qedhere
\end{equation*}
\end{proof}

\begin{theorem}[\cite{FKM, MT}]\label{k}
For every constants $p\in(0, 1)$ and $k\geqslant 0$, {\sl whp}
$$\alpha_k\big(\mathbbmsl{G}(n, p)\big)=\big(2+o(1)\big)\log_{\frac{1}{1-p}}n.$$
\end{theorem}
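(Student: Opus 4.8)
The plan is to prove matching upper and lower bounds on $\alpha_k(\mathbbmsl{G}(n,p))$, both equal to $(2+o(1))\log_{1/(1-p)}n$; since the claimed reference attributes the result to \cite{FKM, MT}, the sketch below is essentially the standard first/second-moment argument. Write $b = \log_{1/(1-p)}n$ and note $(1-p)^m = n^{-m/b}$ for any $m$.

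For the upper bound, fix $k\geqslant 0$ and set $m = \lceil (2+\varepsilon)b\rceil$ for a small constant $\varepsilon>0$. A set $S$ of size $m$ is $k$-independent only if the induced graph $\mathbbmsl{G}(n,p)[S]$ has maximum degree at most $k$; in particular it has at most $km/2$ edges, so it avoids at least $\binom{m}{2} - km/2$ of the possible edges inside $S$. The expected number of such sets is therefore at most
\[
\binom{n}{m}\binom{\binom{m}{2}}{\le km/2}(1-p)^{\binom{m}{2}-km/2}
\le n^m \cdot m^{km/2}\cdot (1-p)^{\binom{m}{2}-km/2}.
\]
Taking logarithms, the dominant term is $m\log n - \binom{m}{2}\log\frac{1}{1-p} = m\log n (1 - \tfrac{m-1}{2b})$, which is $-\Theta(m\log n) \to -\infty$ once $m > (2+\varepsilon)b$, and the correction terms $m^{km/2}$ and $(1-p)^{-km/2}$ are only $\exp(O(m\log m))=\exp(O(b\log\log n))$, hence negligible against $\exp(\Theta(b\log n))$. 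By Markov's inequality, {\sl whp} there is no $k$-independent set of size $m$, so $\alpha_k \le (2+\varepsilon)b$.

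For the lower bound, fix $\varepsilon>0$ and set $m=\lfloor(2-\varepsilon)b\rfloor$; it suffices to exhibit, {\sl whp}, an independent set of size $m$ (which is a fortiori $k$-independent for every $k\ge 0$), so the usual second moment method for $\alpha(\mathbbmsl{G}(n,p))$ applies verbatim: let $Y$ count independent $m$-sets, check $\mathbbmsl{E}[Y]\to\infty$ for this choice of $m$ (the same computation as above with $k=0$ gives $\log\mathbbmsl{E}[Y]=\Theta(\varepsilon b\log n)\to\infty$), and bound $\mathbbmsl{E}[Y^2]/\mathbbmsl{E}[Y]^2 = 1+o(1)$ by splitting the sum over pairs $(S,S')$ according to $|S\cap S'|=j$ and showing each term is dominated; Chebyshev then gives $Y>0$ {\sl whp}. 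Since $\varepsilon>0$ was arbitrary, combining the two bounds yields $\alpha_k(\mathbbmsl{G}(n,p))=(2+o(1))b$ {\sl whp}.

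The only place where $k$ enters nontrivially is the upper bound, and the main point to check there is exactly that the entropy cost of allowing up to $km/2$ edges inside $S$ — the factor $\binom{\binom{m}{2}}{\le km/2}(1-p)^{-km/2}$ — is subexponential in $m\log n$ and therefore cannot compensate the gap between $2b$ and $(2+\varepsilon)b$; once this is observed, the argument collapses to the classical independence-number estimate. I expect this entropy-balancing estimate to be the only genuine obstacle; everything else is routine.
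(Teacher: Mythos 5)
The paper does not prove this statement; it is quoted from the cited references \cite{FKM, MT}, so there is no in-paper argument to compare against. Your sketch is the standard and correct route: the first-moment bound correctly shows that the entropy cost $\binom{\binom{m}{2}}{\leqslant km/2}(1-p)^{-km/2}=\exp\big(O(m\log m)\big)$ of permitting up to $km/2$ internal edges is swamped by the $\exp\big(-\mathnormal{\Theta}(\varepsilon\, m\log n)\big)$ gain once $m\geqslant(2+\varepsilon)\log_{1/(1-p)}n$, and the lower bound reduces to the classical second-moment estimate for the independence number (any independent set being $k$-independent), so the proposal is sound.
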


Actually, we  known  from  \cite{KSZ} that  $\alpha_k(\mathbbmsl{G}(n, p))$ is concentrated in a set of two  consecutive points {\sl whp}. Using Theorems \ref{anyg} and \ref{k}, we conclude the following.

\begin{corollary}\label{cor}
Let $ F $ be  a graph and let $r=r(F)$. Then,  for each fixed real number  $ p\in (0,1)$, {\sl whp}
\[
\mathrm{sat}\big(\mathbbmsl{G}(n, p), F\big)\geqslant \frac{r -1}{2}n-\big(r-1+o(1)\big)\log_{\frac{1}{1-p}}n.
\]
\end{corollary}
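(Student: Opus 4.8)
The plan is to combine the deterministic inequality of Theorem \ref{anyg} with the random-graph estimate of Theorem \ref{k}, applied to $G=\mathbbmsl{G}(n,p)$; the corollary is an immediate consequence of these two results, so there is essentially no obstacle beyond disposing of a degenerate case.

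First I would handle the cases with $r\leqslant 1$. If $F$ has an edge $xy$, then both endpoints have degree at least $1$, so $r=r(F)\geqslant 1$; and if $r\leqslant 1$ the claimed right-hand side is at most $-(r-1+o(1))\log_{\frac{1}{1-p}}n$, which is nonpositive for all large $n$, so the bound holds trivially because $\mathrm{sat}(\mathbbmsl{G}(n,p),F)\geqslant 0$. Hence we may assume $r\geqslant 2$.

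Next, apply Theorem \ref{anyg} with $G=\mathbbmsl{G}(n,p)$: for every outcome of the random graph on $n$ vertices,
\[
\mathrm{sat}\big(\mathbbmsl{G}(n,p),F\big)\geqslant\frac{(r-1)\big(n-\alpha_{r-2}(\mathbbmsl{G}(n,p))\big)}{2}.
\]
Since $r-2\geqslant 0$, Theorem \ref{k} gives that {\sl whp} $\alpha_{r-2}(\mathbbmsl{G}(n,p))=(2+o(1))\log_{\frac{1}{1-p}}n$. Substituting this estimate into the displayed inequality and using that $r$ is a fixed constant, so $(r-1)(2+o(1))/2=r-1+o(1)$, we obtain {\sl whp}
\[
\mathrm{sat}\big(\mathbbmsl{G}(n,p),F\big)\geqslant\frac{r-1}{2}n-\big(r-1+o(1)\big)\log_{\frac{1}{1-p}}n,
\]
which is exactly the claimed bound.

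The only point that merits a word of care is that Theorem \ref{anyg} is a deterministic statement valid for \emph{every} graph $G$, so the \textbf{whp} in the conclusion of the corollary comes entirely from the concentration of $\alpha_{r-2}$ provided by Theorem \ref{k}; no further probabilistic input is needed. One could of course invoke the sharper two-point concentration of $\alpha_k(\mathbbmsl{G}(n,p))$ from \cite{KSZ}, but the asymptotic form in Theorem \ref{k} already suffices for the stated estimate.
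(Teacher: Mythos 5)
Your proof is correct and follows exactly the route the paper intends: the paper derives Corollary \ref{cor} by directly combining the deterministic bound of Theorem \ref{anyg} with the concentration of $\alpha_{r-2}(\mathbbmsl{G}(n,p))$ from Theorem \ref{k}, which is precisely your argument (the paper simply omits the trivial case $r\leqslant 1$ that you dispose of explicitly).
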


For  $F=K_{1,t}$,     the lower  bound  given in  Corollary \ref{cor}  is tight by a result in  \cite{MT}. However,  for graphs     $F$   satisfying  the property that  each  edge   $uv\in E(F)$  with   $\max\{ d(u), d(v)\}=r(F)$  is contained  in  a   triangle, the lower bound can be significantly improved.

For any graph $G$, define  $w(G)=\min_{xy\in E(G)}\{\max\{ d(x) , d(y)\}+|N(x)\cap N(y)|\}$.   Cameron and  Puleo \cite{Cam} proved that
$$\mathrm{ sat}(n, F)\geqslant\frac{w(F)-1}{2}n-\dfrac{ w(F)^2-4 w(F)+5}{2}$$
for any $ n$. Below, we  give a lower bound on $ \mathrm{sat}(\mathbbmsl{G}(n, p), F) $ in terms of $  w(F)$ which is asymptotically stronger than Corollary \ref{cor} for many graphs $ F$.

\begin{theorem}\label{thm:weight}
For any constant $ p\in (0,1)$ and any graph $ F $, {\sl whp}
$$\mathrm{sat}\big(\mathbbmsl{G}(n, p), F\big)\geqslant\frac{ w(F)-1}{2}n-O(\log n).$$
\end{theorem}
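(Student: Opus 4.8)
The plan is to run the deterministic argument of Cameron and Puleo~\cite{Cam}, replacing the use of completeness of the host graph by the quasirandom properties of $\mathbbmsl{G}(n,p)$ recorded in Section~\ref{prelim}. Fix an $F$-saturated subgraph $H$ of $G=\mathbbmsl{G}(n,p)$ and set $w=w(F)$. The deterministic backbone is the \emph{local inequality}: for every $uv\in E(G)\setminus E(H)$ one has $\max\{d_H(u),d_H(v)\}+|N_H(u)\cap N_H(v)|\geq w-1$. Indeed, $H+uv$ contains a copy of $F$ that must use the new edge $uv$; if $uv$ plays the role of an edge $x_0y_0$ of $F$ in this copy, then comparing the degrees of $u,v$ and the size of their common neighbourhood in $H+uv$ with the corresponding quantities for $x_0,y_0$ in $F$, and using the definition of $w(F)$, gives the inequality.

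If $\delta(H)\geq w-1$ then $2|E(H)|\geq(w-1)n$, so assume $\delta(H)\leq w-2$ and put $D=\{v:d_H(v)\leq w-2\}$, $\mathrm{def}(v)=w-1-d_H(v)$ for $v\in D$, and $\mathrm{sur}(v)=\max\{0,d_H(v)-(w-1)\}$. Since $2|E(H)|=(w-1)n-\sum_{v\in D}\mathrm{def}(v)+\sum_v\mathrm{sur}(v)$, it suffices to show that $\sum_{v\in D}\mathrm{def}(v)\leq\sum_v\mathrm{sur}(v)+O(\log n)$, or that $2|E(H)|$ is already $\omega(n)$ (in which case the bound is trivial). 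First I would dispose of the \emph{heavily deficient} vertices: with $\kappa=\lceil(w-1)/2\rceil-1$ and $A=\{v:d_H(v)\leq\kappa\}$, any edge of $G[A]$ not in $H$ would violate the local inequality, as both its endpoint degrees and their common neighbourhood are at most $\kappa<(w-1)/2$; hence $G[A]\subseteq H$, so $A$ is a $\kappa$-independent set of $G$ and $|A|=(2+o(1))\log_{1/(1-p)}n=O(\log n)$ by Theorem~\ref{k}. Thus $A$ contributes only $O(\log n)$ to the total deficiency.

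For the remaining \emph{mildly deficient} vertices $B=D\setminus A$ the randomness enters. Fix $v\in B$; by Lemma~\ref{Cher.Ineq.} the set $N_G(v)\setminus N_H(v)$ has size at least $(1-o(1))pn$, and for each of its members $u$ the local inequality together with $d_H(v)\leq w-2$ yields one of: \emph{(i)} $u$ and $v$ have at least $\mathrm{def}(v)$ common $H$-neighbours, all lying in the bounded set $N_H(v)$; or \emph{(ii)} $d_H(u)\geq\mathrm{def}(v)$ and the created copy identifies $v$ with a vertex of $F$ of degree at most $d_H(v)+1$. If \emph{(i)} holds for at least $pn/2$ of these $u$, then, since $N_H(v)$ has only $\binom{d_H(v)}{\mathrm{def}(v)}=O(1)$ subsets of size $\mathrm{def}(v)$, a pigeonhole argument produces a fixed $T\subseteq N_H(v)$ of size $\mathrm{def}(v)$ all of whose vertices are $H$-adjacent to $\Omega(n)$ vertices, hence have $H$-degree $\Omega(n)$ and surplus $\Omega(n)$; call these vertices \emph{witnesses} and let $W$ be the set of all of them. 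Each such $v$ has $\mathrm{def}(v)$ witnesses inside $N_H(v)$, so $\sum\mathrm{def}(v)\leq\sum_{x\in W}d_H(x)\leq(w-1)|W|+\sum_v\mathrm{sur}(v)$; and if $|W|\geq\log n$ then $2|E(H)|\geq\sum_{x\in W}d_H(x)=\Omega(n\log n)\gg(w-1)n$, while otherwise $|W|=O(\log n)$ and this family contributes at most $\sum_v\mathrm{sur}(v)+O(\log n)$ to the total deficiency.

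It remains to bound the number of mildly deficient $v$ for which alternative \emph{(ii)} holds for at least $pn/2$ of the $u\in N_G(v)\setminus N_H(v)$ — vertices around which most of the created copies of $F$ use a ``thin'' (triangle-free, with one large-degree endpoint) edge of $F$. A further pigeonhole over the $O(1)$ possible roles of the vertices of $F$ shows that all these copies identify $v$ with the same vertex of $F$ and meet $N_H(v)$ in the same pattern; tracing the adjacencies, such vertices turn out to form a set that is again $\kappa'$-independent in $G$ for a suitable constant $\kappa'$, and hence has size $O(\log n)$ by Theorem~\ref{k}. Combining the three contributions gives $\sum_{v\in D}\mathrm{def}(v)\leq\sum_v\mathrm{sur}(v)+O(\log n)$, so $2|E(H)|\geq(w-1)n-O(\log n)$. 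The step I expect to be the main obstacle is precisely this last one: showing that the mildly deficient vertices of the second type are only logarithmically many. This requires a careful analysis of which vertices of $F$ can be identified with $v$ and with its neighbours, combined with the quasirandomness estimates of Lemmas~\ref{random1}--\ref{random2} and Corollary~\ref{cor.random} and the logarithmic bound on $\alpha_k(\mathbbmsl{G}(n,p))$; everything else is either the Cameron--Puleo computation or a routine pigeonhole.
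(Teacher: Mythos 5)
Your local inequality is correct and is exactly the engine of the paper's proof, and your treatment of the heavily deficient vertices (a $\kappa$-independent set, hence of size $O(\log n)$ by Theorem~\ref{k}) and of the case-(i) vertices (charging their deficiency to the $\Omega(n)$-degree witnesses) is sound. But the step you flag as the main obstacle is a genuine gap, and moreover the statement you hope to prove there is false: the mildly deficient vertices of type (ii) need not form a set of size $O(\log n)$. Consider $H$ in which $\Theta(n)$ vertices have degree $w-2$, all of them adjacent to a common hub $h$ of linear degree, and every other vertex has degree $w-1$. Every pair of deficient vertices shares the neighbour $h$, and every pair consisting of a deficient vertex and a degree-$(w-1)$ vertex already satisfies $\max\{d_H(u),d_H(v)\}\geqslant w-1$; so all local inequalities hold, there are $\Theta(n)$ deficient vertices, and their total deficiency is offset only by the surplus of $h$. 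No independence-type argument can bound the number of such vertices by $O(\log n)$; their deficiency must instead be charged to surplus, and your proposal does not supply a charging scheme for type (ii) (where the inequality is satisfied through $d_H(u)$ being large rather than through common neighbours lying in a controllable set).

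The paper avoids this entirely by not attempting a global deficiency--surplus bookkeeping. It orders the vertices by $H$-degree and takes $U$ to be the $\ell=c\log n$ vertices of \emph{smallest} degree. If $d_H(u_\ell)\geqslant w(F)-1$ one is done immediately; otherwise $V=\bigcup_{i\leqslant\ell}N_H(u_i)$ has size $O(\log n)$, and by Lemma~\ref{bandwidth} all but $O(\log n)$ of the remaining vertices $x$ have a $G$-neighbour $u_i\in U$ with $xu_i\notin E(H)$. For such a pair the maximum in the local inequality is attained (up to the usual $-1$) at $x$, because $d_H(x)\geqslant d_H(u_i)$ by the choice of $U$, and the common neighbourhood $N_{F'}(x)\cap N_{F'}(u_i)$ lies inside $V$; hence $d_H(x)+|N_H(x)\cap V|\geqslant w(F)-1$ for every such $x$. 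Summing and observing that edges from these $x$ into $V$ are counted twice in $2|E(H)|$ yields $2|E(H)|\geqslant (w(F)-1)n-O(\log n)$ directly. In effect, anchoring every vertex to a fixed logarithmic-size low-degree set $U$ forces the ``excuse'' for a small degree to be realised as extra edges into the fixed set $V$, which the double count absorbs automatically --- precisely the accounting that your type-(ii) case is missing. I would recommend abandoning the Cameron--Puleo deficiency decomposition and adopting this ordering-plus-Lemma~\ref{bandwidth} device.
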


\begin{proof}
If $  w(F)=1$, then there is nothing to prove.  So, assume that $  w(F)\geqslant 2$.
Let $ G\thicksim\mathbbmsl{G}(n, p)$ and $ \ell= c\log n$, where $ c$ is given in Lemma \ref{bandwidth}.
Assume that $ H$ is an arbitrary $ F$-saturated subgraph of $ G$ whose vertices are labeled as
$ u_1, \ldots, u_n$ for which $d_H(u_1)\leqslant \cdots \leqslant d_H(u_n) $. Let $ U=\{u_1, \ldots, u_{\ell}\}$. For $ i=1, \ldots, \ell$, let $ V_i=N_H(u_i)$ and $ V=\bigcup_{i=1}^{\ell}V_i$. Also, for $ i=1, \ldots,  \ell $, define $ W_i= N_G(u_i)\setminus (U\cup V\cup W_1\cup \cdots \cup W_{i-1} ) $ and set $ W= \bigcup_{i=1}^{\ell}W_i$.	
If $ d_H(u_{\ell})\geqslant w(F)-1 $, then
\[
|E(H)|\geqslant
\frac{\sum\limits_{i=\ell+1}^{n}d_H(u_i)}{2}\geqslant \frac{\big( w(F)-1\big)(n-\ell)}{2}
\]
which concludes the assertion. So, we may assume that $ d_H(u_{\ell})\leqslant w(F)-2 $. Then,
$ |V|\leqslant \sum_{i=1}^{\ell}d_H(u_i) \leqslant \ell( w(F)-2) $.
Let $ R= V(G)\setminus (U\cup V \cup W )$. Note that $ R$ is the set of all vertices in $ V(G)\setminus U$ which are not adjacent to any vertex in $ U$ and so $ |R|\leqslant c\log n$ by  Lemma \ref{bandwidth}.
Let   $x\in W_i$ and let $ F'$ be a copy of $ F$ in  $ H+xu_i$. It follows from $ d_H(x)\geqslant d_H(u_i)$ that $ d_H(x) \geqslant \max \{d_{F'}(x), d_{F'}(u_i)\}-1$. Since $ N_{H}(x)\cap V\supseteq N_{F'}(x)\cap N_{F'}(u_i)$, one concludes  that
\[
d_H(x)+|N_{H}(x)\cap V|\geqslant \max \big\{d_{F'}(x), d_{F'}(u_i)\big\}-1+|N_{F'}(x)\cap N_{F'}(u_i)| \geqslant   w(F)-1.
\]
Now, we may write
\begin{align}
2|E(H)|&\geqslant \sum_{x \in V} d_H(x)+\sum_{x \in W} d_H(x)\nonumber\\
&\geqslant \sum_{x \in V}|N_H(x)\cap U|+\sum_{x \in V}|N_H(x)\cap W|+\sum_{x \in W} d_H(x)\nonumber\\
&\geqslant |V|+\sum_{x \in W} |N_H(x)\cap V|+ \sum_{x \in W} d_H(x)\nonumber\\
&=|V|+\sum_{x \in W} \big(d_H(x)+|N_H(x)\cap V|\big)\nonumber\\
&\geqslant |V|+\sum_{x \in W} \big( w(F)-1\big)\nonumber\\
&= |V|+\big( w(F)-1\big)\big(n-|U|-|V|-|R|\big)\nonumber\\
&=\big( w(F)-1\big)n-\big( w(F)-2\big)|V|-\big( w(F)-1\big)\big(\ell +|R|\big) \label{Ineq}
\end{align}
Since $ \ell = c\log n$, $ |V|\leqslant \ell( w(F)-2) $, and $ |R|\leqslant c\log n$,
the result follows from \eqref{Ineq}.
\end{proof}

\section{Upper bound for bipartite graphs}\label{sc:upper_bip}

In this section,  we prove Theorem  \ref{upb}. Our proof is based on the construction suggested in  \cite{DHZ} which, in turn, resembles the proof strategy of a general linear in $n$ upper bound on $\mathrm{sat}(n, F)$ from  \cite{kas}.
First, we present a useful observation which can be proved straightforwardly.

\begin{observation}\label{obs}
Let $ H$ be an $ F$-free subgraph of  $ G$. Then,  there is an $ F$-saturated subgraph of $ G$ which has $ H$ as a subgraph.
\end{observation}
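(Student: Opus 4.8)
The plan is to use a short extremal (greedy) argument. Before that, I would dispense with one point of bookkeeping: the paper's definition requires an $F$-saturated subgraph to be \emph{spanning}, whereas the hypothesis only says $H$ is a subgraph of $G$. So I would first pass to the spanning subgraph $H^{*}$ of $G$ with $E(H^{*})=E(H)$. Since $F$ has no isolated vertices, any copy of $F$ inside $H^{*}$ uses only edges of $H^{*}=H$ together with their endpoints, hence already sits inside $H$; therefore $H^{*}$ is $F$-free. Moreover, every subgraph of $G$ containing $H^{*}$ contains $H$ as a subgraph, so it suffices to find an $F$-saturated subgraph of $G$ containing $H^{*}$.

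Next, among all spanning $F$-free subgraphs of $G$ that contain $H^{*}$, I would choose one, call it $H'$, with the maximum number of edges. This set of candidates is nonempty (it contains $H^{*}$) and the number of edges of any subgraph of $G$ is bounded above by $|E(G)|$, so the maximum is attained. By construction $H'$ is spanning, $F$-free, and contains $H$ as a subgraph.

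Finally, I would check that $H'$ is $F$-saturated. Let $e\in E(G)\setminus E(H')$ be arbitrary. If $H'+e$ were $F$-free, then $H'+e$ would be a spanning $F$-free subgraph of $G$ containing $H^{*}$ and having strictly more edges than $H'$, contradicting the maximality in the choice of $H'$. Hence $H'+e$ contains a copy of $F$, which is exactly the saturation condition, and the proof is complete.

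I do not anticipate any genuine obstacle here: the statement is essentially the observation that a maximal $F$-free subgraph (with respect to edge inclusion, above a fixed $F$-free $H$) is automatically $F$-saturated. The only subtlety worth stating explicitly is the spanning-subgraph convention in the definition, handled in the first paragraph; equivalently one could argue algorithmically by repeatedly adding any edge of $E(G)$ whose addition keeps the current graph $F$-free, a process that terminates because the edge count strictly increases and is bounded by $|E(G)|$.
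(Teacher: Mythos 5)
Your proof is correct; the paper itself gives no argument for this observation (it is declared provable ``straightforwardly''), and your maximality/greedy argument is exactly the standard one intended. One small remark: your first paragraph invokes the hypothesis that $F$ has no isolated vertices, which appears in Theorem~\ref{upb} but not in the statement of the observation itself --- this is genuinely needed if $H$ is not spanning, but in both places the paper applies the observation ($H_0$ in Theorems~\ref{th:known} and~\ref{upb}) the subgraph is already spanning, so your reduction step is a harmless extra precaution rather than a gap.
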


Below,  we show    how a general linear in $n$ upper bound on $\mathrm{sat}(n, F)$ can be derived from Observation \ref{obs}. While we use the same construction as in  \cite{kas}, we formulate the proof in a different way in order to make the move to random settings smoother.

\begin{theorem}[\cite{kas}]\label{th:known}
Let $ F$ be a graph and $ S $ be an independent set in $ F$ with maximum possible size. Let
$ b = |V(F)|-|S|-1$ and
$ d= \min\{|N_F(x)\cap S| \,|\, x\in V(F)\setminus S\}$. Then,
\[
\mathrm{sat}(n, F)\leqslant \frac{2b +d-1}{2}n-\frac{b(b+d)}{2}.
\]
\end{theorem}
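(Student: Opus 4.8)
\textbf{Proof plan for Theorem \ref{th:known}.}
The plan is to build an explicit $F$-free spanning subgraph $H$ of $K_n$ with the claimed number of edges and then invoke Observation \ref{obs}; note that Observation \ref{obs} only ever \emph{adds} edges, so an $F$-free $H$ with few edges yields an $F$-saturated subgraph with at most that many edges plus whatever saturation forces, but for the \emph{upper} bound we actually want $H$ itself to already be $F$-saturated, or else argue more carefully. So instead I would directly construct an $F$-saturated subgraph. Fix a maximum independent set $S$ of $F$, write $s=|S|$, so $b=|V(F)|-s-1$ and $d=\min\{|N_F(x)\cap S|: x\in V(F)\setminus S\}$. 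Partition $V(K_n)$ as $B\cup C$ where $|B|=b$ and $|C|=n-b$. On $B$ put a clique; join every vertex of $B$ to every vertex of $C$; and on $C$ put a $(d-1)$-regular (or as-regular-as-possible) graph $G_0$. The edge count is $\binom{b}{2}+b(n-b)+\frac{d-1}{2}(n-b)$, which rearranges to $\frac{2b+d-1}{2}n-\frac{b(b+d)}{2}$ up to the rounding in the regular graph on $C$; the floor/ceiling issues contribute only an additive constant, which is absorbed since the statement has no error term beyond the exact rational expression — here I would either assume $(d-1)(n-b)$ is even (handle parity by deleting/adding one edge, changing the bound by at most $1/2$, and then note the theorem as stated may implicitly tolerate this, or cite the standard rounding convention used in \cite{kas}).

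Next I would verify that $H$ is $F$-free. Any copy of $F$ in $H$ would use some vertex set $T$; since $|B|=b=|V(F)|-s-1<|V(F)|$, the copy must use at least $s+1$ vertices of $C$. But within $C$ the graph is $(d-1)$-regular, hence has maximum degree $d-1<d\le$ (every non-$S$ vertex of $F$ has $\ge d$ neighbors in $S$, in particular degree $\ge d$ in $F$ if... ) — the cleaner argument: in any embedding of $F$, the vertices mapped into $C$ must induce, inside $F$, a subgraph of maximum degree at most $d-1$; I claim such a vertex subset of $F$ has size at most $s$. Indeed if $U\subseteq V(F)$ induces maximum degree $\le d-1$ and $|U|\ge s+1$, then... this needs the defining property of $d$ together with maximality of $S$. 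The honest route is: the part of the $F$-copy landing in $C$ has all its $F$-neighbors-outside-$C$ among the $\le b$ vertices in $B$, so each such vertex $x$ has $d_{F}(x)\le (d-1)+b$; combined with $|V(F)\setminus S|$ being the complement of a maximum independent set, a short extremal argument (every vertex outside $S$ sends $\ge d$ edges into $S$, and $|B|=|V(F)\setminus S|-1$) forces the $C$-part to have size $\le s$, contradiction. This is the step I expect to be the main obstacle — pinning down exactly why $(d{-}1)$-regularity on $C$ plus a clique-plus-complete-join on a set of size exactly $|V(F)\setminus S|-1$ blocks every copy of $F$ — and I would prove it as a separate combinatorial lemma about embeddings of $F$.

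Finally, $F$-saturation: adding any non-edge to $H$ must create a copy of $F$. The only non-edges of $H$ lie inside $C$ (since $B$ is complete and $B$–$C$ is complete). Adding an edge $e=xy$ inside $C$: now $x$ has $d$ neighbors in $C$ (the $d-1$ from $G_0$ plus $y$). I would choose $G_0$ not merely $(d-1)$-regular but with enough structure (e.g. a disjoint union of suitable complete-ish pieces, exactly as in \cite{kas}) so that $x,y$ together with $b$ vertices of $B$ and appropriately chosen further vertices of $C$ realize a copy of $F$ — concretely, map $V(F)\setminus S$ onto $B\cup\{x\}$ and $S$ onto $\{y\}\cup$ (neighbors of $x,y$ in $C$), using that every vertex of $V(F)\setminus S$ has $\le d$ neighbors in $S$ and $x$ now has $\ge d$ available $C$-neighbors while $B$-vertices are complete to everything. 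I would spell out the embedding and check degrees, which is routine once $G_0$ is chosen with components of size $\ge d$ (or a single near-regular graph of girth/structure guaranteeing the needed neighborhoods). Putting the three parts together — correct edge count, $F$-freeness, and saturation — gives $\mathrm{sat}(n,F)\le |E(H)|=\frac{2b+d-1}{2}n-\frac{b(b+d)}{2}$, as claimed; the switch to random settings later replaces "$B$–$C$ complete" by "$B$–$C$ inside $\mathbbmsl{G}(n,p)$" and forces $|B|=a-1$ with the $p^{-(a-1)}$ loss, which is why the proof is phrased through the generic construction here.
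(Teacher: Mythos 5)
Your plan diverges from the paper at exactly the point you flag as a worry, and the divergence introduces a genuine gap. The combinatorial lemma your $F$-freeness step rests on --- ``if $U\subseteq V(F)$ induces a subgraph of maximum degree at most $d-1$, then $|U|\leqslant |S|$'' --- is false. Take $F=K_{2,3}$ with parts $\{u_1,u_2\}$ and $S=\{w_1,w_2,w_3\}$, so $|S|=3$, $b=1$, $d=3$: the set $U=\{u_1,u_2,w_1,w_2\}$ induces $K_{2,2}$, which has maximum degree $2=d-1$, yet $|U|=4>|S|$. Consequently your explicit construction (clique on $B$, complete join to $C$, arbitrary $(d-1)$-regular $G_0$ on $C$) need not be $F$-free: for $K_{2,3}$, if the $2$-regular graph $G_0$ happens to contain a $4$-cycle, that $C_4$ together with the single $B$-vertex already yields a copy of $K_{2,3}$. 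So the route you chose forces a delicate, $F$-dependent choice of $G_0$ (this is essentially what K\'aszonyi and Tuza actually do), and both the $F$-freeness and the saturation verifications become nontrivial case analyses rather than the ``routine'' checks you describe.

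The paper avoids all of this with the idea you talked yourself out of. It takes only $H_0$ = (clique on $B$) $\cup$ (complete join $B$--$\overline{B}$), with $\overline{B}$ empty inside; this is $F$-free for the trivial reason that any copy of $F$ would place at least $|S|+1$ vertices in the independent set $\overline{B}$. It then applies Observation \ref{obs} to get \emph{some} $F$-saturated $H\supseteq H_0$ --- no explicit description needed --- and bounds $|E(H)|$ a posteriori: if a vertex $x\in\overline{B}$ had $d$ or more $H$-neighbours in $\overline{B}$, then already $E(H_0)\cup E_H(\{x\},\overline{B})$ would contain a copy of $F$ (map a minimizing vertex $v$ with $|N_F(v)\cap S|=d$ to $x$, the rest of $V(F)\setminus S$ to $B$, and $S$ into $\overline{B}$ using $x$'s neighbours for $N_F(v)\cap S$), contradicting $F$-freeness of $H$. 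Hence every vertex of $\overline{B}$ contributes at most $d-1$ edges inside $\overline{B}$, and the count follows. In short: saturation is never verified for a concrete graph; $F$-freeness of the saturated extension is instead \emph{used} to cap its size. Your instinct that ``Observation \ref{obs} only adds edges, so we must argue more carefully'' was the right place to stop and think, but the resolution is this degree bound on the extension, not an explicit saturated construction.
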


\begin{proof}
Let $B$ be a subset of $V(K_n)$ of  size $b$ and let  $\overline{B}=V(K_n)\setminus B$. Consider the spanning subgraph $H_0$ of $ K_n $ obtained by deleting all edges whose both endpoints are  in  $\overline{B}$. If there is a copy $ F'$ of $ F$ in $ H_0$, then $ V(F')\cap \overline{B}$ is an independent set of size
\[
|V(F')\cap \overline{B}|= |V(F')|-|V(F')\cap B|\geqslant |V(F)|-|B|=|S|+1,
\]
a contradiction. This shows that $ H_0$ is $ F$-free.  Using Observation \ref{obs}, there is an $ F$-saturated subgraph of $ G$, say $ {H}$,  with $ E(H)\supseteq E(H_0)$.
For every  $x\in  \overline{B}$, we have $ |N_{H}(x)\cap \overline{B}|\leqslant d-1$, as otherwise the  subgraph of $ H $ with the edge set $ E(H_0)\cup E_H(\{x\}, \overline{B})$ contains a copy of $ F$.
Since
\[
|E(H)|=|E(H_0)|+\sum_{x \in \overline{B}}|N_{H}(x)\cap \overline{B}|\leqslant |E(H_0)|+\frac{(d-1)|\overline{B}|}{2},
\]
the result follows.
\end{proof}

Let us now prove Theorem  \ref{upb}. Note that it is impossible to find a construction as in the proof of Theorem  \ref{th:known},  since vertex  degrees in the random graph equal $np(1+o(1))$. Thus, instead of considering a single clique $B$ with its common neighborhood, we will consider $\mathnormal{\Theta}(\ln n)$ disjoint sets of constant sizes as well as their common neighborhoods. For the sake of  convenience, we handle the case of $F$ being a disjoint union of stars  separately. This proves Theorem \ref{upb}  for  the  case   $a=1$ and  generalizes  a   result  given  in \cite{MT}.

\begin{lemma}\label{lm:star}
Let $ p\in (0, 1)$ be constant and let $ F$ be the disjoint union of stars $ K_{1, t_1}, \ldots, K_{1, t_k}$ with $ k\geqslant 1 $ and $ t_1\geqslant \cdots \geqslant t_k\geqslant 1 $. Then, {\sl whp}
$$\mathrm{sat} \big(\mathbbmsl{G}(n, p), F\big)=\frac{t_k-1}{2}n-\big(t_k-1+o(1)\big)\log_{\frac{1}{1-p}}n.$$
\end{lemma}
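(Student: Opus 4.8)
\emph{Lower bound.} This part is immediate from what is already available. For $F=K_{1,t_1}\cup\cdots\cup K_{1,t_k}$, every edge of $F$ joins the centre of some component $K_{1,t_i}$ (which has degree $t_i$ in $F$) to a leaf (of degree $1$), so $r(F)=\min_{1\le i\le k}t_i=t_k$. If $t_k\ge 2$, then Corollary \ref{cor} applied with $r=r(F)=t_k$ gives exactly $\mathrm{sat}(\mathbbmsl{G}(n,p),F)\ge \frac{t_k-1}{2}n-(t_k-1+o(1))\log_{\frac{1}{1-p}}n$ {\sl whp}. If $t_k=1$, both sides of the asserted equality are $O(1)$ (see below), so it holds trivially for an appropriate $o(1)$ term, and there is nothing more to do on this side.

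\emph{Upper bound: the construction.} I will exhibit, {\sl whp}, an $F$-saturated subgraph $H$ of $G\sim\mathbbmsl{G}(n,p)$ with $|E(H)|=\frac{t_k-1}{2}n-(t_k-1+o(1))\log_{\frac{1}{1-p}}n$; this refines the classical disjoint-clique construction and its $K_{1,t}$ random analogue from \cite{MT}. Put $m:=1+\sum_{i=1}^{k-1}(t_i+1)$, a constant. First fix a clique $Q_0$ of $G$ on $m+t_k-1$ vertices (it exists {\sl whp}). Let $W:=V(G)\setminus N_G[Q_0]$; {\sl whp} $|W|=(1-p)^{|Q_0|}n(1+o(1))=\Theta(n)$, so by Theorem \ref{k} (with $k=0$), {\sl whp} $G[W]$ contains an independent set $I$ with $|I|=(2+o(1))\log_{\frac{1}{1-p}}n$. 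Choose $c_0\in\{0,\dots,t_k-1\}$ with $m+c_0+|I|\equiv n\pmod{t_k}$, let $Q\subseteq Q_0$ with $|Q|=m+c_0$, and partition $V(G)\setminus(Q\cup I)$ into cliques of $G$ of size $t_k$ — this is possible {\sl whp} by the standard fact that $\mathbbmsl{G}(n,p)$ {\sl whp} admits a $K_{t_k}$-factor (here the divisibility has been arranged, and the $O(\log n)$ removed vertices $Q\cup I$ do not affect this). Let $H$ be the spanning subgraph of $G$ whose edges are exactly those inside $Q$ together with those inside each size-$t_k$ clique; thus $H$ is a disjoint union of cliques plus the isolated set $I$, and $|E(H)|=\binom{m+c_0}{2}+\frac{t_k-1}{2}\bigl(n-(m+c_0)-|I|\bigr)=\frac{t_k-1}{2}n-(t_k-1+o(1))\log_{\frac{1}{1-p}}n$.

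\emph{Upper bound: verifying that $H$ is $F$-saturated.} For $F$-freeness: every vertex of $I$ has degree $0$ and every vertex of a size-$t_k$ clique has degree $t_k-1<t_k$, while each component of $F$ is a star $K_{1,t_i}$ with a centre of degree $t_i\ge t_k$ and $t_i+1\ge 2$ vertices; hence every component of $F$ must embed into $Q$, and since $F$ is vertex-disjoint this forces $|V(F)|=\sum_{i=1}^{k}(t_i+1)=m+t_k\le |Q|=m+c_0\le m+t_k-1$, a contradiction, so $H$ is $F$-free. For saturation: an edge of $G$ that is not in $H$ cannot lie inside $Q$ or inside a size-$t_k$ clique (those are complete), nor between $I$ and $Q$ (there are no such edges of $G$, as $I\subseteq W\subseteq V\setminus N_G[Q]$). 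In every remaining case the edge has the form $wx$ with $w$ in a size-$t_k$ clique $C$ and $x\in I$, or $x\in Q$, or $x$ in another size-$t_k$ clique; adding $wx$ makes $d(w)=t_k$, so $\{w\}\cup(C\setminus\{w\})\cup\{x\}$ carries a $K_{1,t_k}$ centred at $w$. Moreover $Q\setminus\{x\}$ is a complete graph on at least $m-1=\sum_{i=1}^{k-1}(t_i+1)$ vertices, disjoint from $C\cup\{x\}$, hence it contains $K_{1,t_1}\cup\cdots\cup K_{1,t_{k-1}}$; together with the star at $w$ this is a copy of $F$ in $H+wx$. Thus $H$ is $F$-saturated (in particular, we do not even need Observation \ref{obs}), completing the upper bound; combined with the lower bound this proves the lemma, the $t_k=1$ case being immediate since then $|E(H)|=\binom{m+c_0}{2}=O(1)$.

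\emph{Main obstacle.} The only genuinely non-routine point is the choice of $|Q|$: it must be large enough that $Q$ minus one vertex still contains $K_{1,t_1}\cup\cdots\cup K_{1,t_{k-1}}$ (used for saturation), yet small enough that $Q$ alone cannot contain all of $F$ (used for $F$-freeness). These are compatible precisely because $|V(K_{1,t_1}\cup\cdots\cup K_{1,t_{k-1}})|+1=m\le m+t_k-1<|V(F)|$, which also leaves exactly $t_k$ admissible sizes for $Q$ — just enough freedom to fix the divisibility required for the $K_{t_k}$-factor. The concentration estimates for $|W|$ and for the independence number, the existence of the $K_{t_k}$-factor on the complement of $Q\cup I$, and the bookkeeping about which edges are exposed when $I$ is selected are standard and already handled (for $k=1$) in \cite{MT}.
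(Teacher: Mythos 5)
Your proposal is correct and follows essentially the same route as the paper: both prove the lower bound via Corollary \ref{cor} with $r(F)=t_k$, and both obtain the upper bound by planting a constant-size clique together with a $(t_k-1)$-regular spanning structure on almost all vertices (via an Alon--F\"uredi-type embedding with a union bound over the choice of the independent set), leaving an independent set of $(2+o(1))\log_{1/(1-p)}n$ isolated vertices to realize the $-(t_k-1+o(1))\log_{1/(1-p)}n$ saving. The only differences are cosmetic: you use a $K_{t_k}$-factor instead of an arbitrary $(t_k-1)$-regular graph, fix divisibility by shrinking the clique $Q$ rather than adjusting $\ell$, and compensate for the possibly smaller clique by placing $I$ outside $N_G[Q_0]$ so that no $G$-edge joins $I$ to $Q$ --- all of which is sound.
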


\begin{proof}
In view of Corollary \ref{cor}, it suffices to prove the upper bound. Using Theorem \ref{k},
$  \alpha(\mathbbmsl{G}(n, p))=(2+o(1))\log_{1/(1-p)}n$  {\sl whp}. Let $G\thicksim\mathbbmsl{G}(n, p)$ and $h=|V(F)|-1 $.
Fix an integer-valued function $\ell=\ell(n)=(2+o(1))\log_{1/(1-p)}n$ such that  $(n-h-\ell)(t_k-1)$ is even and $\alpha(\mathbbmsl{G}(n, p))\geqslant \ell$  {\sl whp}.
Also, let $ L$ be the disjoint union of $ K_h$ and an arbitrary regular graph on   $n-h-\ell$ vertices with   degree  $t_k-1$.
We know from  a result of Alon  and F\"uredi \cite{AF} that,  for sufficiently small $\varepsilon>0$, the graph $\mathbbmsl{G}(n-\ell, n^{-\varepsilon})$ contains a copy of $L$ {\sl whp}. Using the standard multiple-exposure technique, it implies that $\mathbbmsl{G}(n-\ell, p)$ does not contain  a copy of $L$ with probability at most $\exp(n^{-\varepsilon+o(1)})$. Thus, by the union bound, {\sl whp} there exists a subset  $S\subseteq V(G)$ with $|S|=\ell$ such that $S$ is an  independent set in $G$  and $G[V(G)\setminus S]$ has a copy $L'$  of  $ L$ as a subgraph. Denote by $H$  the spanning subgraph of $G$ with  the edge set $E(L')$. It is easily seen that $H$ is an $F$-saturated subgraph of $G$ and
$$|E(H)|=\frac{(n-h-\ell)(t_k-1)}{2}+\binom{h}{2}=\frac{t_k-1}{2}n-\big(t_k-1+o(1)\big)\log_{\frac{1}{1-p}}n$$
which  concludes the result.
\end{proof}

\begin{remark}
Note that  Lemma  \ref{lm:star} for $ t_k=1$ could be strengthened as follows.
If  $ F$ is a graph with a connected component $ K_2$, then
$ \mathrm{sat} (\mathbbmsl{G}(n, p), F) \leqslant \binom{|V(F)|-1}{2}$ {\sl whp}. Conversely, if $ \mathrm{sat} (\mathbbmsl{G}(n, p), F)$ is bounded from above by a constant, then Corollary \ref{cor} forces $ F$ to have a connected component $ K_2$.
\end{remark}

\begin{proof}[Proof of   Theorem  \ref{upb}]
In view of  Lemma  \ref{lm:star}, we may  assume that
$ a\geqslant 2$.
Let $ G\thicksim\mathbbmsl{G}(n, p)$, $ b=1-p^{a-1}$, and $  \ell = \lfloor\log_{1/b}n^{2/3}\rfloor$.
Without loss of generality, assume that $ |A_1|= \cdots = |A_q|>|A_{q+1}|\geqslant \cdots \geqslant |A_k|$ for some $ q$.	Fix disjoint arbitrary $ (a-1) $-subsets $ V_1,  \ldots, V_{\ell}$ and $ (a+1) $-subsets $ V_{\ell +1},  \ldots, V_{\ell +q-1}$ of $ V(G)$.  Set $ V= \bigcup_{i=1}^{\ell}V_i$ and $ V'= \bigcup_{i=\ell +1}^{\ell+q-1}V_i$. Let $ M_i =\bigcup_{j=1}^{i} N(V_j)$ for any $ i\geqslant 1$.
For $ i=1, \ldots,  \ell+q-1 $, define $ W_i= N(V_i)\setminus (V\cup V' \cup M_{i-1} ) $ and set $ W= \bigcup_{i=1}^{\ell}W_i$.
Let $ R= V(G)\setminus (V \cup W )$. Note that $ R= (V(G)\setminus (V\cup M_{\ell}))\cup V'$.
Set $ V''=V'\cap M_{\ell}$.
A   schematic of    the  structure of $V(G)$  is illustrated in Figure \ref{fg:fg}.

\begin{figure}[H]
\begin{center}
\includegraphics[width=400pt,height=200pt]{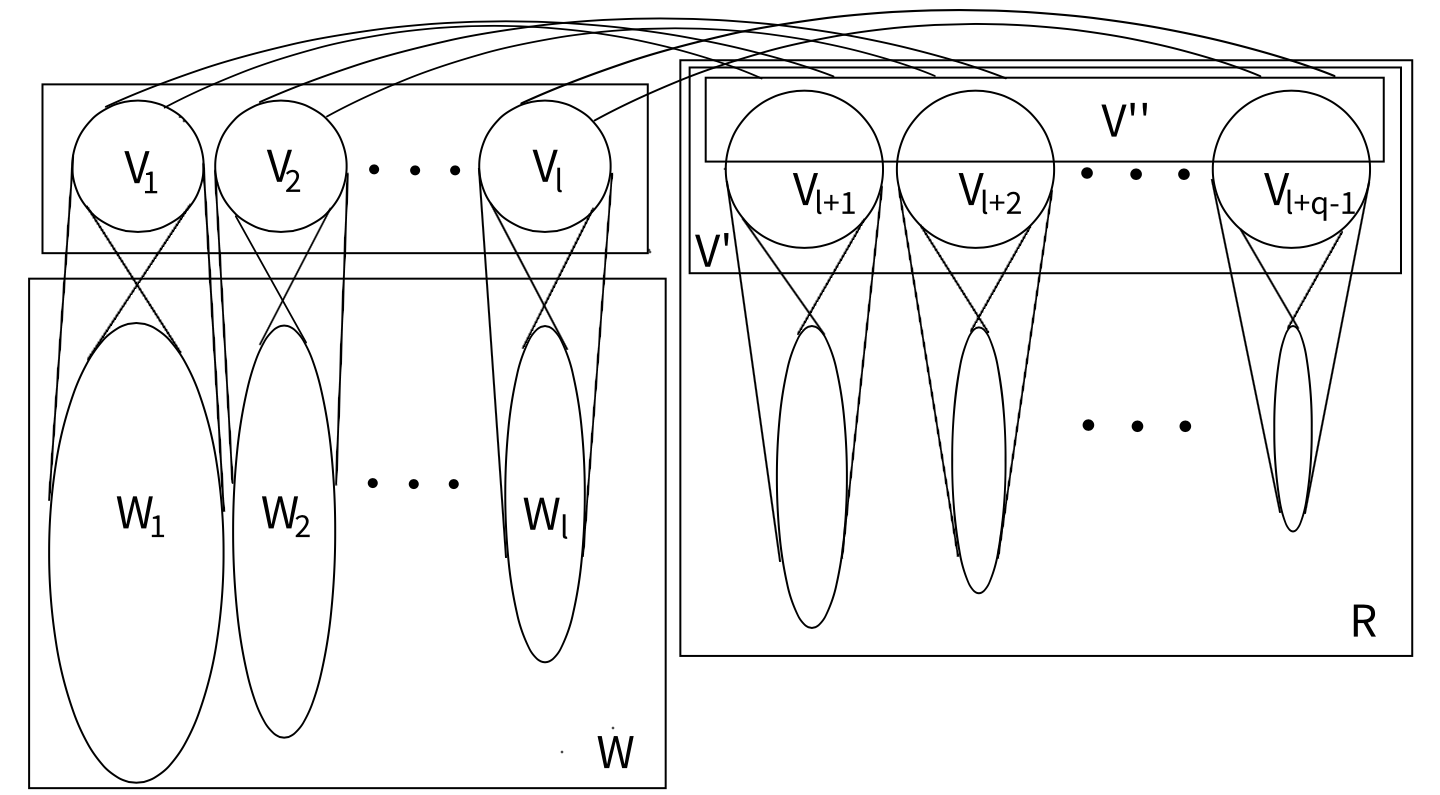}
\caption{The     structure of $V(G)$     described in the proof of Theorem \ref{upb}.}
\label{fg:fg}
\end{center}
\end{figure}

As $ |R\setminus V''| \thicksim \mathrm{Bin}(n-\ell(a-1), b^{\ell})$, Lemma \ref{Cher.Ineq.} implies that {\sl whp}
\[
|R|= b^{\ell}\big(n-\ell(a-1)\big)\big(1+o(1)\big)+|V''|
\]
which gives that $ |R|=O(n^{1/3})$. Similarly,  for all $i$, {\sl whp}
\[
|W_i|= b^{i-1}(1-b)\big(n-\ell(a-1)-(q-1)(a+1)\big)\big(1+o(1)\big)
\]
which yields that $ |W_i|=\mathnormal{\Omega} (n^{1/3})$ for $ i=1, \ldots, \ell+q-1 $. In particular, $ |W_i|\geqslant \max\{|B_1|, \ldots, |B_k|\}+1$.

Let $ H_0$ be a spanning subgraph of $ G$ with $ E(H_0)=\cup^{\ell+q-1}_{i=1}E_G(V_i, W_i)$.
By the definition of $ a$, we conclude that $ H_0$ is $ F$-free. Using Observation \ref{obs}, there is an $ F$-saturated subgraph of $ G$, say $ {H}$,  with $ E(H)\supseteq E(H_0)$. Now, we bound the number of edges of $ {H}$.
We will use
\begin{align}\label{eq1}
|E(H)|&= |E_H(V(G)\setminus W)|+|E_H(V(G)\setminus W, W)| + |E_H(W)|.
\end{align}
It follows	from $V(G)\setminus W=R\cup V $ that   $ |V(G)\setminus W|= O(n^{1/3})$ and hence
$ |E_H(V(G)\setminus W)|= O(n^{2/3}) $.
For every $ i\in \{1, \ldots, \ell\} $ and every $x\in  V(G)\setminus V_i$, we have $ |N_{H}(x)\cap W_i|\leqslant \delta-1$, as otherwise the bipartite subgraph of $ H $ with the edge set $ E(H_0)\cup E_H(\{x\}, W_i)$ contains a copy of $ F$.
Therefore,
\begin{align}\label{eq2}
|E_H( V(G)\setminus W, W)|&=\sum_{i=1}^{\ell} |E_H( V(G)\setminus W, W_i)|\nonumber\\
&=\sum_{i=1}^{\ell} |E_H( V(G)\setminus (V_i\cup W ), W_i)|+\sum_{i=1}^{\ell}|E_H(V_i, W_i)|\nonumber\\
&\leqslant  \ell(\delta-1)|V(G)\setminus W|+ \sum_{i=1}^{\ell}|E_H(W_i, V_i)|\nonumber \\
&\leqslant O\left(n^{\frac{1}{3}}\log n\right)+(a-1)n.
\end{align}
It remains to estimate $ |E_H( W)|$. To do this, we write
\begin{align}\label{eq3}
|E_H( W)|&= \sum_{i=1}^{\ell}\sum_{j=1}^{i-1}|E_H(W_i, W_j)|+ \sum_{i=1}^{\ell}|E_H(W_i)|\nonumber\\
&\leqslant \sum_{i=1}^{\ell}(i-1)(\delta -1)|W_i|+\sum_{i=1}^{\ell}\frac{\delta -1}{2} |W_i|\nonumber\\
&=\frac{\delta -1}{2} \sum_{i=1}^{\ell}(2i-1)|W_i|\nonumber\\
&\leqslant \frac{\delta -1}{2} \sum_{i=1}^{\ell}(2i-1)b^{i-1}(1-b)n\big(1+o(1)\big)\nonumber\\
&\leqslant \frac{\delta -1}{2}(1-b)n\big(1+o(1)\big) \sum_{i=1}^{\ell}(2i-1)b^{i-1}\nonumber\\
&= \frac{\delta -1}{2}(1-b)n\big(1+o(1)\big) \frac{1+b-(2\ell +1)b^{\ell}+(2\ell -1)b^{\ell+1}}{(1-b)^2}\nonumber\\
& \leqslant \frac{\delta -1}{2} \left(\frac{1+b}{1-b}\right)n\big(1+o(1)\big).
\end{align}
By \eqref{eq1}--\eqref{eq3}, we conclude that
\begin{align*}
|E(H)|&\leqslant \left( \frac{\delta -1}{2} \left(\frac{1+b}{1-b}\right)+a-1\right)n\big(1+o(1)\big)\\
&=  \left(\frac{\delta -1}{p^{a-1}}-\frac{\delta -2a+1}{2}+o(1)  \right)n.
\end{align*}
Since $ \mathrm{ sat}(G, F)\leqslant |E(H)|$, the result follows.
\end{proof}

\section{Lower bound for $\boldsymbol{ K_{s,t}}$}\label{sc:lower_bip}

In this section,  we prove the two lower bounds in Theorem  \ref{th:bip_intro}. We start from the bound that does not depend on $p$  that is  stated separately below. Let us recall that this bound generalizes the lower bound from  \cite{DSZ} for $F=K_{2,2}$, that is,
$\mathrm{sat}(\mathbbmsl{G}(n, p), K_{2,2}) \geqslant (\frac{3}{2}+o(1))n$ {\sl whp}.  However, our argument
is simpler and resembles the argument used by Bohman, Fonoberova, and  Pikhurko \cite{pikh} for their asymptotic lower bound on $ \mathrm{sat}(n, K_{s,t})$.

\begin{theorem}\label{pikh.low}
Let $ t\geqslant s\geqslant 2 $ and $ p\in (0, 1)$ be  constants. Then, {\sl whp}
\[
\mathrm{sat}\big(\mathbbmsl{G}(n, p), K_{s,t}\big)\geqslant \left(\frac{2s+t-3}{2}+o(1)\right)n.
\]
\end{theorem}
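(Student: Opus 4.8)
The plan is to prove the stronger deterministic statement: if $G\thicksim\mathbbmsl{G}(n,p)$ satisfies the (whp) conclusions of the lemmas of Section~\ref{prelim}, then every $K_{s,t}$-saturated subgraph $H$ of $G$ has $e(H)\geq\big(\tfrac{2s+t-3}{2}+o(1)\big)n$. Since $H$ is $K_{s,t}$-free, the K\H{o}v\'ari--S\'os--Tur\'an bound gives $e(H)=O(n^{2-1/s})$; fix $D:=n^{1-1/(2s)}$ and let $\mathrm{Hi}:=\{v\in V(G):d_H(v)\geq D\}$, so that $|\mathrm{Hi}|\leq 2e(H)/D=o(n)$. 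Writing $G':=V(G)\setminus\mathrm{Hi}$ and splitting $E(H)$ according to how edges meet $\mathrm{Hi}$,
\[
2e(H)=2\,|E(H[G'])|+2\,|E_H(G',\mathrm{Hi})|+2\,|E(H[\mathrm{Hi}])|\ \geq\ 2\,|E(H[G'])|+2\,|E_H(G',\mathrm{Hi})|.
\]
Hence it suffices to prove: \textbf{(a)} all but $o(n)$ vertices $v\in G'$ satisfy $|N_H(v)\cap G'|\geq t-1$; and \textbf{(b)} all but $o(n)$ vertices $v\in G'$ satisfy $|N_H(v)\cap\mathrm{Hi}|\geq s-1$. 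Indeed (a) forces $2\,|E(H[G'])|=\sum_{v\in G'}|N_H(v)\cap G'|\geq(t-1)(n-o(n))$, while (b) forces $2\,|E_H(G',\mathrm{Hi})|=2\sum_{v\in G'}|N_H(v)\cap\mathrm{Hi}|\geq 2(s-1)(n-o(n))$, so $2e(H)\geq\big(t-1+2(s-1)\big)(n-o(n))=(2s+t-3)(n-o(n))$. The split $\tfrac{2s+t-3}{2}=\tfrac{t-1}{2}+(s-1)$ matches the extremal picture for $\mathrm{sat}(n,K_{s,t})$: a clique on $s-1$ vertices joined to a disjoint union of copies of $K_t$ (the $K_t$'s account for (a), the joined clique for (b)).

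The heart is (b), which I would attack along the lines of Bohman, Fonoberova and Pikhurko~\cite{pikh}, transplanting their argument from $K_n$ to the random host. First discard the $O(\log n)$ vertices of $H$-degree at most $s-2$: they form an $(s-2)$-independent set in $G$ (no $G$-edge between two of them can create a $K_{s,t}$, since both endpoints would have degree $<s$), so Theorem~\ref{k} bounds them. For a remaining $v\in G'$, Lemma~\ref{Cher.Ineq.} with the union bound gives $(p+o(1))n$ non-neighbours $u$ of $v$ in $H$, and for each such $u$ the graph $H+uv$ contains a copy $K$ of $K_{s,t}$ using $uv$; recording the side of $K$ containing $v$ together with the set $C_u:=N_H(v)\cap V(K)\subseteq N_H(v)$ of $v$'s neighbours in $K$ (so $|C_u|\in\{s-1,t-1\}$, both $\geq s-1$), one seeks a fixed such configuration that occurs for $\Omega(n)$ of the $u$. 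One then argues, using that $H$ is $K_{s,t}$-free — so $C_u$ together with any vertex outside it has at most $t-1$ (resp.\ $s-1$) common $H$-neighbours — and the pseudorandomness of $G$ (Lemma~\ref{random1} and Corollary~\ref{cor.random}, which control how these $H$-neighbourhoods sit inside the $G$-neighbourhoods of size $\Theta(p^{|C_u|}n)$), that the common neighbourhood of $C_u$ is in fact $\Omega(n)$; then every vertex of $C_u$ has $H$-degree $\Omega(n)>D$, i.e.\ $C_u\subseteq\mathrm{Hi}$, and $v$ has its $\geq s-1$ neighbours in $\mathrm{Hi}$. For (a) one runs the same circle of ideas: a vertex of $G'$ with fewer than $t-1$ $H$-neighbours inside $G'$ has small degree in $H[G']$, and one checks (again through the created copies, now invoking $r(K_{s,t})=t$ as in Theorem~\ref{anyg}) that $H[G']$ behaves almost like a $K_{s,t}$-saturated graph inside $G[G']$, so that only $o(n)$ vertices of $G'$ can violate (a).

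The step I expect to be the main obstacle is exactly the structural core just sketched for (b): producing, inside $N_H(v)$ itself rather than merely at $H$-distance two, enough vertices of degree $\Omega(n)$. The difficulty is twofold. First, the naive pigeonhole over subsets of $N_H(v)$ is effective only when $d_H(v)$ is bounded, whereas a vertex of $G'$ may have degree up to $D=n^{1-1/(2s)}$; one must therefore interleave the counting with the elimination of the (only $o(n)$, by $e(H)=O(n^{2-1/s})$) vertices of larger degree, all the while keeping the number of local configurations under control. Second, even once a good set $C_v\subseteq N_H(v)$ is isolated it is \emph{a priori} only the ``primary'' witness, and if the secondary witnesses furnished by the $\Omega(n)$ copies all coincide then $C_v$ may have only a bounded common neighbourhood — forcing one to iterate the saturation/pseudorandomness analysis, or to argue directly about the $G$-edges leaving $v$, in order to relocate the high-degree vertices into $N_H(v)$. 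This mirrors the delicate case analysis of~\cite{pikh}, here complicated by having $\mathbbmsl{G}(n,p)$ rather than the complete graph as the host.
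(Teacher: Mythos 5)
Your global accounting is essentially the paper's: isolate a small set of high-degree vertices, show that almost every other vertex sends at least $s-1$ edges into it and has at least $t-1$ further incident edges, and sum to get $(s-1)+\frac{t-1}{2}$ per vertex. But the two structural claims that carry all the weight are left as sketches, and the sketches do not close. For your claim (b) you yourself flag the obstacle --- pigeonholing over the sets $C_u\subseteq N_H(v)$ is useless when $d_H(v)$ can be as large as $n^{1-1/(2s)}$, and even a popular $C_u$ need not have large common neighbourhood --- and you propose no resolution. The paper's Claim \ref{cl:2} resolves exactly this point with a different and much shorter idea: if $x$ and $y$ both have at most $s-2$ $H$-neighbours in the high-degree set $A$ and $xy\in E(G)\setminus E(H)$, then in the copy of $K_{s,t}$ created by $xy$ each of $x,y$ has a neighbour \emph{outside} $A$ on the opposite side, so $x$ and $y$ are joined in $H-A$ by a path of length $1$ or $3$; since all degrees outside $A$ are below the threshold, the number of such paths from any vertex is at most $n^{3/4}$, and comparing with the $\mathnormal{\Theta}(p|B|^2)$ $G$-edges inside $B$ (Lemma \ref{random1}) forces $|B|=O(n^{3/4})$. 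Note that this argument needs the degree threshold to be $n^{1/4}$, which is available only because Theorem \ref{upb} already gives $|E(H)|=O(n)$ whp; your choice of the K\H{o}v\'ari--S\'os--Tur\'an bound with threshold $n^{1-1/(2s)}$ would make the path count $n^{3-3/(2s)}\gg n^2$ and kill this route as well.

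Your claim (a) has a second problem: as stated ($|N_H(v)\cap G'|\geqslant t-1$ for almost all $v\in G'$) it is both unproven and not the right statement. Nothing you write rules out $\mathnormal{\Omega}(n)$ vertices sending all of their edges into the high-degree set, in which case (a) fails while your term $2|E_H(G',\mathrm{Hi})|$ is not credited with the surplus; and discarding the vertices of degree at most $s-2$ via Theorem \ref{k} only handles degrees below $s-1$, far short of what is needed. The claim the paper actually proves (Claim \ref{cl:3}) is that only $O(\log^2 n)$ vertices have $H$-degree at most $s+t-3$, via a Ramsey argument: a large $G$-clique of low-degree vertices (Lemma \ref{random2}) yields, through an edge-colouring by matched neighbour indices, a clique of size $s+t$ in $H$, contradicting $K_{s,t}$-freeness. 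With that in hand one counts, for each good vertex, $s-1$ edges into $A$ plus $t-1$ \emph{remaining} edges going anywhere (not necessarily inside $G'$), which fixes the accounting. In short: the skeleton is right, but both load-bearing steps are missing, and the specific ideas that make them work (the length-$3$ path count and the Ramsey colouring) do not appear in your proposal.
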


\begin{proof}
Let $ G\thicksim\mathbbmsl{G}(n,p) $ and let $ H $ be a $ K_{s,t} $-saturated subgraph of $ G$ with minimum possible number of edges. Let $V=V(G) $. By Theorem \ref{upb},  we have that  $ |E(H)| = O(n) $ {\sl whp}. For   the subsets
\begin{align*}
A &= \left\{x\in V  \,   \left| \, d_H(x) \geqslant n^{\frac{1}{4}}\right.\right\},\\
B &= \big\{x \in V  \setminus A \, \big| \, |N_H(x)\cap A| \leqslant s - 2\big\},\\
C &= \big\{x \in V  \, \big| \, d_H(x) \leqslant s+t - 3\big\},\\
D& = V \setminus (A\cup B \cup C).
\end{align*}
of $ V(H)$, we   prove  the following claims.

\begin{claim}\label{cl:1}
{\sl Whp} $ |A|=O(n^{3/4})$.
\end{claim}

\begin{proof}
Since $ |E(H)| \geqslant |A| n^{1/4}/2 $, we have
$|A| = O(n^{3/4})$.
\renewcommand{\qedsymbol}{$\da$}
\end{proof}

\begin{claim}\label{cl:2}
{\sl Whp}  $ |B|=O(n^{3/4})$.
\end{claim}

\begin{proof}
Take any two  vertices $ x, y \in B $ such that $\{x,y\}\in E(G)\setminus E(H) $. The addition of $ xy $
to $ H $ creates a copy of $ K_{s,t} $ with vertex bipartition $ \{ X,  Y \}$ so that $ x \in X$ and
$ y \in Y $. Since $ |N_H(x)\cap A| \leqslant s-2$, $x $ has a neighbor $ y'\in Y \setminus A$. Similarly, $ y$
has a neighbor $ x'\in X \setminus A$. This shows that there is a path  $ x, y', x', y$ of length three which connects $ x$ to $ y$ in $ H- A$.
Therefore, every two vertices in $B$ which are adjacent in $G$ are connected in $ H-A$ by a path of length one or three.
If $|B|\leqslant n^{3/4} $, then we are done. Otherwise, by Lemma \ref{random1}, we have
\[
\frac{p}{2}\binom{|B|}{2} \leqslant |E(G[B])|\leqslant |E(H[B])|+|B|\left(n^{\frac{1}{4}}n^{\frac{1}{4}}n^{\frac{1}{4}}\right)
\leqslant \frac{|B|n^{\frac{1}{4}}}{2}+|B|n^{\frac{3}{4}}
\]
which gives $ |B|=O(n^{3/4})$.
\renewcommand{\qedsymbol}{$\da$}
\end{proof}

\begin{claim}\label{cl:3}
$|C|\leqslant\log^{2} n$.
\end{claim}

\begin{proof}
By contradiction, assume that $ |C|>\log^{2} n$.
Recall that the Ramsey number
$ R_{s+t-3}(s+t) $ is the smallest positive integer $ m $ such that any coloring of the edges of $ K_m $ with
$s+t-3$
colors gives a monochromatic copy of $ K_{s+t} $.
Using Lemma \ref{random2}, $ C $ contains a clique $ C' $ of size $M= (s+t-2)R_{s+t-3}(s+t) $ in $ G$. We know that every graph $ \mathnormal{\Gamma}$ contains an independent set of size at least $ |V(\mathnormal{\Gamma})|/(\mathnormal{\Delta}(\mathnormal{\Gamma})+1)$.
Since each
vertex of $ C' $ has degree at most $ s+t-3 $ in $ H$, there is an independent set $ C''\subseteq C'  $ with $ |C''|\geqslant R_{s+t-3}(s+t) $ in $H $.
For each vertex
$ x \in C'' $, fix an arbitrary ordering of $ N_H(x) $ which we encode
by a bijection $ f_x : N(x) \rightarrow \{1, \ldots, d_{H}(x)\} $.
For each pair of distinct vertices $ x, y \in C'' $
do the following. Fix a copy
of $ K_{s,t} $ in $ H+xy $ with partition $ \{X, Y\} $ so that $ x \in X $ and $ y \in Y$ . Since
$|(X\setminus\{x\}) \cup (Y\setminus\{y\})|=s+t-2$, there are $ x'\in X\setminus\{x\}  $ and $ y'\in Y \setminus\{y\}$ with $ f_x(x')=f_y(y')$. Denote the integer $ f_x(x')=f_y(y')$ by $ a$.  Clearly, $ x'y'\in E(H) $. Now, color the edge $ xy$ by $ a$.
This defines an edge coloring of $ E(G[C''])$ with
$ s+t-3 $ colors. By Ramsey's theorem, there is a  $ (s+t) $-subset $ C'''\subseteq C'' $ such that all edges of $G[C'''] $ have the same color, say $ c $. For every two distinct vertices $ x,y\in C'''$,  as  $ f^{-1}_x (c)$ and $ f^{-1}_y (c)$ are adjacent in $ H$,  $ f^{-1}_x (c)\neq f^{-1}_y (c)$.   So $\{f^{-1}_x (c) | x\in C''' \}  $ is a clique of order $ s+t $ in $ H $ which contradicts the $ K_{s,t} $-freeness of $ H $, proving the claim.
\renewcommand{\qedsymbol}{$\da$}
\end{proof}

Using Claims  \ref{cl:1}--\ref{cl:3}, we conclude that $ |D|=n-O(n^{3/4})$.
Since every vertex in $ D$ has at least $ s-1$ neighbors in $ A$, we may choose $ s-1$ distinct edges for each vertex of $ D $. Put all these edges in a set $ E_1$. Since any vertex in $D$  has at least $ s+t-2$ neighbors in $ H$, we conclude that  every vertex in $ D$ is incident to at least $ t-1$ edges in  $ E(H)\setminus E_1$.
Now, we have
\[
|E(H)|\geqslant |E(D, V(H))|\geqslant (s-1)|D|+ \frac{t-1}{2}|D|\geqslant \left(\frac{2s+t-3}{2}+o(1)\right)n.
\qedhere
\]
\end{proof}

The second lower bound in Theorem  \ref{th:bip_intro}  is stated below.

\begin{theorem}\label{lowbnd}
Let $ t\geqslant s\geqslant 2 $ and $ p\in (0, 1)$ be constants. Then, {\sl whp}
\[
\mathrm{sat}\big(\mathbbmsl{G}(n, p), K_{s,t}\big)\geqslant \left(\frac{t-s}{4p^{s-1}}+\frac{s-1}{2}+o(1)\right)n.
\]
\end{theorem}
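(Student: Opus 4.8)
The plan is to fix $G\thicksim\mathbbmsl{G}(n,p)$ and a $K_{s,t}$-saturated subgraph $H$ of $G$ with the fewest edges; it suffices to bound $|E(H)|$ from below, and by Theorem~\ref{upb} we may assume $|E(H)|=O(n)$ whp. Throughout we use the following whp properties of $G$, obtained from Lemmas~\ref{bandwidth}, \ref{random1}, \ref{random2} and Corollary~\ref{cor.random} together with union bounds: $\delta(G)=(1+o(1))pn$; for every $(s-1)$-subset $Z\subseteq V(G)$ one has $|N_G(Z)|=(p^{s-1}+o(1))n$, and more generally $|N_G(Z)|=(p^{|Z|}+o(1))n$ whenever $|Z|$ is bounded; the covering property of Lemma~\ref{bandwidth}; and the edge-count estimates of Lemma~\ref{random1} and Corollary~\ref{cor.random}. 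A first, easy reduction, carried out exactly as in the proof of Theorem~\ref{pikh.low}, gives $\delta(H)\geqslant s-1$ whp: if $d_H(v)\leqslant s-2$ then some $G$-edge at $v$ lies outside $H$ because $d_G(v)\gg s$, and adding it creates a copy of $K_{s,t}$ in which $v$ has degree at least $s$, a contradiction. This accounts for the summand $\tfrac{s-1}{2}n$, so the task is to produce the extra $\tfrac{t-s}{4p^{s-1}}n$.

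The source of the factor $p^{1-s}$ is the following mechanism, which I would isolate as the key lemma. Fix a threshold $\theta$ (to be optimised at the end) and call a vertex $v$ \emph{light} if $d_H(v)\leqslant\theta$ and \emph{heavy} otherwise; since $|E(H)|=O(n)$, the heavy vertices contribute at least $\tfrac{\theta}{2}$ times their number to $\sum_v d_H(v)$. For a light vertex $v$ and any $y\in N_G(v)$ with $vy\notin E(H)$, the copy of $K_{s,t}$ created by adding $vy$ has $v$ on one of its two sides; inspecting the two cases, one finds an $(s-1)$-subset or a $(t-1)$-subset $Z\subseteq N_H(v)$ with $v\in N_H(Z)$ such that $y$ has at least $t-1$, respectively $s-1$, common $H$-neighbours with $Z$. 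Since $d_H(v)\leqslant\theta$, the number of candidate sets $Z$ is bounded by the constant $\binom{\theta}{s-1}+\binom{\theta}{t-1}$, so averaging over the $(1+o(1))pn$ admissible $y$'s singles out one set $Z=Z(v)\subseteq N_H(v)$ that works for a positive proportion of them. The crucial point is that $N_H(Z)$ lies inside the common $G$-neighbourhood $N_G(Z)$, hence has at most $(p^{s-1}+o(1))n$ vertices (or $(p^{t-1}+o(1))n$ in the second case): the $\Theta(n)$ edges forced by $v$ are thereby squeezed into a set smaller than $V(G)$ by a factor $p^{s-1}$, and this is what inflates the final edge count by $p^{1-s}$.

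The technical heart is then to turn this local information into a global lower bound, i.e.\ to add up the contributions of all light vertices without overcounting. I would organise the light vertices into groups indexed by their witness sets $Z(v)$; all light vertices in a group lie inside the single small set $N_G(Z)$, which bounds the size of each group and forces the groups, or at least the edges they certify, to be spread out. Balancing the number of nonempty groups, the sizes $|N_G(Z)|$, the number of edges each group certifies and the global estimate $|E(H)|=O(n)$, while treating separately the degenerate possibility that almost all light vertices fall into boundedly many groups, should give $\sum_{v}\bigl(d_H(v)-(s-1)\bigr)\geqslant\bigl(\tfrac{t-s}{2p^{s-1}}+o(1)\bigr)n$ after optimising $\theta$. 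The value $\tfrac{t-s}{4p^{s-1}}$ in the statement, as opposed to something like $\tfrac{t-1}{2p^{s-1}}$, is the residue of these lossy steps, in particular of counting each certified edge from both of its endpoints and of downgrading $t-1$ to $t-s$ to absorb the light vertices that may sit on the size-$s$ side of the created copies.

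I expect the overcounting control in this last step to be the main obstacle: one must quantify how much genuinely new information the potentially many light vertices sharing a single common neighbourhood $N_G(Z)$ contribute, and rule out, or directly estimate, the regime where this sharing is extreme. A secondary nuisance is keeping all the $o(n)$ error terms uniform over the $\Theta(n^{s-1})$ possible witness sets $Z$, which is precisely why the union-bounded forms of the lemmas in Section~\ref{prelim} are set up as they are.
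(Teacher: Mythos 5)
You have correctly located the source of the $p^{1-s}$ factor: the witnesses are $(s-1)$-sets $Z$ whose common $G$-neighbourhood has size $(p^{s-1}+o(1))n$, and the vertices having $t-1$ common $H$-neighbours with $Z$ are exactly the sets the paper calls $F_Y$ (with $N_Y=N_H(Y)$). But the proposal stops precisely where the proof has to start: everything after ``the technical heart is then to turn this local information into a global lower bound'' is a list of difficulties rather than an argument, and the route you sketch for that step does not deliver the stated constant. Concretely, selecting a single witness $Z(v)$ per light vertex by averaging over the at most $\binom{\theta}{s-1}+\binom{\theta}{t-1}$ candidate subsets of $N_H(v)$ costs a multiplicative factor of that binomial quantity; since the heavy/light dichotomy only helps when $\theta\gtrsim p^{1-s}$, this loss grows like $p^{-(s-1)(t-1)}$ and the balancing you describe cannot recover $\tfrac{t-s}{4p^{s-1}}$. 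Moreover, the degenerate regime you flag (many light vertices sharing few witnesses) is genuinely fatal to a per-vertex accounting: a single set $Z$ with $|F_Z|=\Omega(pn)$ yields only $\Omega((t-1)pn)$ edges, with no $p^{1-s}$ gain, because squeezing many edges into the small set $N_H(Z)$ creates high-degree vertices but not more edges. Finally, your second case (a $(t-1)$-subset of $N_H(v)$ when $v$ sits on the $t$-side) is an artefact of insisting $Z\subseteq N_H(v)$ and only certifies $s-1$ edges into a set of size $\sim p^{t-1}n$, which is weaker still.

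The paper's proof replaces the per-vertex selection by a single global double count, and this is the missing idea. One observes that \emph{every} edge of $E(G)\setminus E(H)$ — there are $(1+o(1))pn^2/2$ of them — lies in $E_G(N_Y,F_Y)$ for some $(s-1)$-set $Y$ with $|N_H(Y)|\geqslant t$ (take $Y$ to be the $s$-side of the created copy minus its endpoint there; the two endpoints of the missing edge then land in $N_Y$ and $F_Y$ respectively, so no $(t-1)$-set case arises). One then shows that the sets $Y$ containing a vertex of $H$-degree at most $n/\log^{s+1}n$ cover only $o(n^2)$ of these edges, leaving a polylogarithmic family $\EuScript{C}$ of relevant witnesses; for those, Corollary \ref{cor.random} gives $|E_G(N_Y,F_Y)|\leqslant p^{s}n|F_Y|(1+o(1))+O(n\log^2 n)$, whence $\sum_{Y\in\EuScript{C}}|F_Y|\geqslant(1+o(1))n/(2p^{s-1})$ with the correct constant. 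The overcounting you worry about is then handled by deleting the (polylogarithmically small) pairwise intersections of the $N_Y$'s to obtain disjoint sets $M_Y$, and discarding the few vertices with $\geqslant s$ neighbours in the deleted part, so that each remaining $x\in F_Y$ contributes $t-s$ edges into $M_Y$ that are counted for no other witness. Analogues of these three steps — the restriction to high-degree witnesses, the edge-count inequality forcing $\sum|F_Y|\gtrsim n/p^{s-1}$, and the disjointification — are what your plan would need and does not contain.
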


\begin{proof}
If $ s=t$, then the assertion follows from Corollary \ref{cor}. So, assume that $ t>s$.  	
Let $ G\thicksim\mathbbmsl{G}(n,p) $ and let $ H $ be a $ K_{s,t} $-saturated subgraph of $ G$ with minimum possible number of edges. Let $ V=V(G)$. By Theorem \ref{upb},  we have $ |E(H)| = O(n) $. Consider the partition $\{A, B, C\}$ of $ V$, where
\begin{align*}
A & = \big\{x\in V \, \big| \, d_H(x) < \log n\big\},\\
B & = \left\{x \in V \, \left| \, \log n\leqslant d_H(x) \leqslant \frac{n}{\log^{s+1} n}\right.\right\},\\
C &  = \left\{x \in V \, \left| \, d_H(x) > \frac{n}{\log^{s+1} n}\right.\right\}.
\end{align*}
For any  $ y\in V$, set $ N_y=N_H(y)$ and $$ F_y=\big\{x\in V \, \big| \, |N_H(x, y)|\geqslant t-1 \big\}.$$ Moreover, let  $$ \EuScript{O}=\big\{Y\subseteq V \, \big| \, |Y|=s-1 \text{ and } |N_H(Y)|\geqslant t\big\}.$$  	Further, for any  $ Y\in \EuScript{O}$, set $ N_Y=N_H(Y)$ and $$ F_Y=\big\{x\in V \, \big| \, |N_H(\{x\}\cup Y)|= t-1 \big\}.$$
Finally,  consider the partition $ \{\EuScript{A}, \EuScript{B}, \EuScript{C} \}$ of $ \EuScript{O}$, where
\begin{align*}
\EuScript{A} &  = \{Y\in \EuScript{O} \,|\,  Y\cap A \neq \varnothing\},\\
\EuScript{B} &  = \{Y\in \EuScript{O} \,|\, Y\cap A = \varnothing  \text{ and }  Y\cap B \neq \varnothing \},\\
\EuScript{C}&  = \{Y\in \EuScript{O} \,|\,  Y\subseteq C \}.
\end{align*}
Since adding every edge $xx'\in E(G)\setminus E(H)$ to $ H$ creates a copy of  $K_{s,t} $ in $ H$, we conclude that
$ E(G)\setminus E(H) \subseteq \bigcup_{ Y\in \EuScript{O}}E_G(N_Y, F_Y) $. Therefore,  using Lemma \ref{random1}, we find that {\sl whp}
\begin{align}\label{B1}
\left|\bigcup_{ Y\in \EuScript{O}}E_G(N_Y, F_Y)\right|\geqslant |E(G)\setminus E(H)|= \frac{n^2p}{2}\big(1+ o(1)\big).
\end{align}	
Note that $ E_G(N_Y, F_Y)\subseteq E_G(N_y, F_y) $ for every $ y\in Y$, since
$ N_Y\subseteq N_y$ and $ F_Y\subseteq F_y$.
For every vertex $ y\in V$, by a double counting of the set
$ \{(x, S) \, | \,  x\in F_y,  S \subseteq N_H(x, y), \text{ and }  |S|=s \}$, we derive that
\[
|F_y|\binom{t-1}{s}\leqslant \binom{|N_y|}{s}(t-1).
\]
It follows from $ t>s$ that $ |F_y|\leqslant |N_y|^s$.
Hence,  $ |F_y|\leqslant \log^sn$ for every $ y\in A$. This gives
\begin{align}\label{B2}
\left|\bigcup_{ Y\in \EuScript{A}}E_G(N_Y, F_Y)\right| \leqslant \left|\bigcup_{ y\in {A}}E_G(N_y, F_y)\right|
\leqslant  \sum_{ y \in A} |N_y||F_y|\leqslant n (\log n) \log^sn
=n\log^{s+1} n.
\end{align}
Since  $ |E(H)|=O(n)$ {\sl whp}, we get that   $ |F_y\setminus A|=O(n/\log n)$ for each  $ y\in V$ {\sl whp}.
Using this, we may write {\sl whp}
\begin{align}\label{B3}
\left|\bigcup_{ Y\in \EuScript{B}}E_G(N_Y, F_Y)\right|&\leqslant
\left|\bigcup_{ y\in {B}}E_G(N_y, F_y)\right|\nonumber\\
&\leqslant\sum_{ y \in B} |E_G(N_y, F_y)|\nonumber\\
&\leqslant  \sum_{ y \in B} |E_G(N_y, F_y\setminus A)|+\sum_{ y \in B}|E_G(N_y, F_y\cap A)| \nonumber\\
&\leqslant\sum_{ y \in B} |N_y||F_y\setminus A| +\sum_{ y \in B}|N_y|| F_y\cap A|  \nonumber\\
&\leqslant O\left(\frac{n}{\log n}\right)\sum_{ y \in B} |N_y|+ \frac{n}{\log^{s+1}n}\sum_{x \in A}|F_x\cap B|\nonumber\\
&\leqslant O\left(\frac{n}{\log n}\right)|E(H)| +\frac{n}{\log^{s+1}n}|A|\log^{s}n\nonumber\\
&=O\left(\frac{n^2}{\log n}\right)
\end{align}
Since $|E(H)|=O(n)$ {\sl whp},   we deduce that  $ |C| = O(\log^{s+1} n)$ {\sl whp}  and so
$ |\EuScript{C}|\leqslant |C|^{s-1} = O(\log^{s^2-1} n) $ {\sl whp}.
Now, by setting $ \lambda=2$ in Corollary \ref{cor.random}, we obtain  that  {\sl whp}
\begin{align}\label{B4}
\left|\bigcup_{ Y\in \EuScript{C}}E_G(N_Y, F_Y)\right|&\leqslant
\sum_{ Y \in \EuScript{C}} |E_G(N_Y, F_Y)|\nonumber\\
&\leqslant \sum_{Y\in \EuScript{C}}\biggl(3n\log^2n+p|N_Y||F_Y|\big(1+o(1)\big)\biggr) \nonumber\\
&\leqslant 3|\EuScript{C}|n\log^{2}n+\sum_{ Y \in \EuScript{C}} p^{s}n|F_Y|\big(1+o(1)\big)\nonumber\\
&\leqslant O\left(n\log^{s^2+1}n\right)+p^{s}n\left(\sum_{Y\in \EuScript{C}}|F_Y|\right)\big(1+o(1)\big).
\end{align}
Therefore,	by \eqref{B1}--\eqref{B4}, we find that {\sl whp}
\begin{align}\label{B5}
\sum_{Y\in \EuScript{C}}|F_Y|\geqslant \frac{n}{2p^{s-1}}\big(1+o(1)\big).
\end{align}
Set $$ S=\bigcup_{{X, Y\in \EuScript{C}}\atop{X\neq Y}} N_X\cap N_Y. $$
Note that  $ |S|\leqslant \binom{|\EuScript{C}|}{2}(t-1)=O(\log^{2s^2-2}n)$ {\sl whp}. For every $Y\in \EuScript{C}$, set $ M_Y = N_Y\setminus S$.	Let
$$ F'=\left\{\left.x\in \bigcup_{Y\in \EuScript{C}}F_Y \, \right| \, |N_x\cap S|\geqslant s \right\}.$$
We claim that $ |F'|\leqslant \binom{|S|}{s}(t-1)$. To see this, suppose otherwise. By the pigeonhole principle, there is a $ t$-subset $T$ of $ F'$ such that $ |N_H(T)\cap S|\geqslant s $ which gives a copy of $ K_{s, t}$ in $ H$, a contradiction.
This proves the claim which in turn implies that $ |F'|=O(\log ^{2s^3-2s}n)$. For every $ Y\in \EuScript{C} $, set $ F'_Y= F_Y\setminus F'$.
Noting that the sets $ M_Y$ are mutually disjoint and $ F_Y\cap Y=\varnothing$ for every $Y\in \EuScript{O} $, we may write {\sl whp}
\begin{align*}
2|E(H)|&= \sum_{Y\in \EuScript{C}}\sum_{x\in M_Y}d_H(x)+\sum_{x\notin \mathop{\bigcup}_{Y\in \EuScript{C}}M_Y}d_H(x)\\
&\geqslant\sum_{Y\in \EuScript{C}} \big( |E_H(M_Y, F'_Y\setminus M_Y)|+2|E_H(M_Y, F'_Y\cap M_Y)|+|E_H(M_Y, Y)|\big)
+(s-1)\left|V\setminus \bigcup_{Y\in \EuScript{C}}M_Y\right|\\
&\geqslant\sum_{Y\in \EuScript{C}}\big((t-s)|F'_Y\setminus M_Y|+(t-s)|F'_Y\cap M_Y|+(s-1)|M_Y|\big)+ (s-1)\left(n- \sum_{Y\in \EuScript{C}}|M_Y|\right)\\
& =(s-1)n+ \sum_{Y\in \EuScript{C}}(t-s)|F'_Y|\\
& = (s-1)n+(t-s)\left(\left(\sum_{Y\in \EuScript{C}}|F_Y|\right)-|\EuScript{C}| |F'|\right)\\
&\geqslant (s-1)n+(t-s)\left(\frac{n}{2p^{s-1}}\big(1+o(1)\big)-O\left(\log^{2s^3+s^2-2s-1}n\right)\right)\\
& = \left(\frac{t-s}{2p^{s-1}}+s-1+o(1)\right)n,
\end{align*}
where the last inequality follows from  (\ref{B5}), completing the proof.
\end{proof}

We   point     out here  that Theorem \ref{th:bip_intro} is concluded from Theorems    \ref{pikh.low}  and  \ref{lowbnd}.

\begin{remark}
It is worth noting that using the proof of Theorem \ref{pikh.low}, one may improve the estimate on the number of edges of $H $ in the last paragraph of proof of Theorem \ref{lowbnd}  to obtain
\[
\mathrm{sat}(\mathbbmsl{G}(n, p), K_{s,t})\geqslant \left(\frac{t-s}{4p^{s-1}}+s-1+o(1)\right)n.
\]
For the sake of clarity of presentation we disregarded this improvement in the proof of Theorem  \ref{lowbnd}.
\end{remark}

\end{document}